\documentclass[11pt]{amsart}
\usepackage[utf8]{inputenc}
\usepackage{physics}
\usepackage{graphicx}
\usepackage{amsmath}
\usepackage{amssymb}
\usepackage{setspace}
\usepackage{textcomp}
\usepackage{extarrows}
\usepackage{ifthen}
\usepackage{pgfplots}
\usepackage{amsfonts}
\usepackage{amscd}
\definecolor{MFCB}{cmyk}{0,0.06,0.20,0.6}
\usepackage{mathtools}
\usepackage{amsrefs}
\usepackage{tikz}
\usepackage{pgfplots}
\usetikzlibrary{calc,math,backgrounds}
\usepackage{ifthen}
\usepackage{pgfplots}
\definecolor{MFCB}{cmyk}{0,0.06,0.20,0.6}

\usepackage{amsthm}
\newtheorem{thm}{Theorem}

\newtheorem{lemma}{Lemma}
\newtheorem{proposition}{Proposition}

\theoremstyle{definition}
\newtheorem{definition}{Definition}
\theoremstyle{remark}

\newcommand{\R}{\mathbb{R}}

\newcommand{\N}{\mathbb{N}}
\newcommand{\Z}{\mathbb{Z}}

\renewcommand{\norm}[1]{\left|#1 \right|}
\DeclareMathOperator{\supp}{Supp}
\DeclareMathOperator{\Div}{div}
\DeclareMathOperator{\DIV}{Div}
\DeclareMathOperator{\dist}{dist}

\DeclareMathOperator{\FSym}{FSym}
\DeclareMathOperator{\FAlt}{FAlt}
\pgfplotsset{compat=1.18}
\begin{document}

\title{On linear divergence in finitely generated groups}
\author[Letizia Issini]{Letizia Issini
}
\thanks{\textsc{Section de Mathématiques, Université de Genève, Switzerland \\ Email:} \texttt{letizia.issini@unige.ch}}
\keywords{divergence, wreath products of groups, Houghton groups, Baumslag-Solitar groups, wreath products of graphs, Diestel-Leader graphs, horocyclic products}
\subjclass[2020]{20F65; 20E22, 05C76}

\begin{abstract}
In this paper, we show that wreath products of groups have linear divergence, and we generalise the argument to permutational wreath products. We also prove that Houghton groups $\mathcal{H}_m$ with $m\geq 2$ and Baumslag-Solitar groups have linear divergence. We explain how to generalise the argument for wreath products so that it holds for halo products of groups whose halo is large-scale commutative. Finally, we show that wreath products of graphs and Diestel-Leader graphs have linear divergence. The argument for Diestel-Leader graphs is further generalised to horocyclic products of proper, geodesically complete, Busemann $\delta$-hyperbolic spaces that are uniformly not a quasi-line.
\end{abstract}
\maketitle
\section{Introduction}The divergence of a geodesic metric space is a function that measures how difficult it is to connect two points, avoiding a certain ball around a third point. First mentioned by Gromov in \cite{Gr3} as the notion of divergence between two geodesics, it was formally defined as a quasi-isometry invariant of a geodesic metric space by Gersten \cite{Ger94b}.
It is at least linear, and by definition it is infinite for metric spaces that have more than one end. It is easy to check that direct products of non-compact metric spaces have linear divergence, and that it is exponential in one-ended hyperbolic metric spaces.

Divergence has been extensively studied in finitely generated groups. Certain classes of groups including lattices in semi-simple Lie groups of $\mathbb{Q}$-rank $1$ and $\mathbb{R}$-rank $\geq 2$, uniform lattices in higher rank semi-simple Lie groups \cite{DMS10}, and Thompson groups $F$, $T$ and $V$ \cite{GolanSapir}, have been shown to have linear divergence. 

The first example of a group with quadratic divergence is given in \cite{Ger94b} and it is a CAT(0)-group. 
Further examples of quadratic divergence include the mapping class group (\cite{Behrstock}, see also \cite{duchin2010divergence}).
Examples of groups with divergence polynomial of any degree are known, but also of divergence $n^{\alpha}$ and $log(n)n^{k}$ for $\alpha$ in a dense subset of $[2,\infty)$ and integers $k\geq 2$ (see \cite{brady2021divergence}). In \cite{OOS05}, elementary amenable groups with superlinear divergence, and examples of groups with superlinear arbitrarily close to linear divergence are given. Divergence is known to be at most exponential for finitely presented groups \cite{sisto2012metric}. 

It is interesting to understand which structural properties of a group are reflected in its divergence function. An important result of Dru\c{t}u, Mozes and Sapir is that linear divergence is equivalent to not having cut-points in the asymptotic cones \cite{DMS10}, and this can be used to show that non-virtually cyclic groups that satisfy a law have linear divergence (see \cite{DMS10} and \cite{DS05}). 

In this paper, our aim is to enlarge the class of groups known to have linear divergence. Our main result is that a wreath product $H\wr F$ of finitely generated groups has linear divergence (see Section \ref{section:wreath products}, Theorem \ref{proposition:wreath}). We extend the proof further to permutational wreath products $H\wr_X F$, where the action of $F$ on the set $X$ is quasi-transitive and either $H$ or $X$ is infinite (in Section \ref{section:permutational wreath product}, Theorem \ref{proposition:pwp}).
A similar argument is used to show that Houghton group $\mathcal{H}_2$, i.e $\FSym(\Z)\rtimes\Z$, has linear divergence. Like a wreath product over an infinite group, this is another example of semi-direct product with non-finitely generated normal subgroup, and the similarity of the proof lies in the use of commutation relations of elements that are ``far away", according to the lamplighter and lampshuffler interpretation respectively. The proof for $\mathcal{H}_2$ can be generalised to all lampshuffler groups $\FSym(H)\rtimes H$, but also Houghton groups $\mathcal{H}_m$ with $m\geq 2$ (see Section \ref{section:houghton groups}, Theorem \ref{houghton}). Lamplighters and lampshufflers are particular instances of halo products, as defined by Genevois and Tessera \cite{gen-tes}. With the assumption of being large-scale commutative, notion also defined in \cite{gen-tes}, we can show that halo products have linear divergence (Section \ref{section:halo products}, Theorem \ref{thm:halo products}). Large-scale commutativity is a necessary assumption: the examples constructed in \cite{OOS05} of elementary amenable lacunary hyperbolic groups are halo products. Next we show that Baumslag-Solitar groups have linear divergence (Section \ref{section:baumslag-solitar groups}, Proposition \ref{proposition:bsgroups}).
Note also that having linear divergence implies that the Floyd boundary consists of a point for any Floyd function (as follows from the definition of Floyd boundary, see \cite{floyd}). As a consequence, all these groups have Floyd boundary consisting of one point.
Moreover, having linear divergence, these groups have no cut-points in their asymptotic cones by \cite{DMS10}.

Going back to the more general context of divergence of geodesic metric spaces, we study Diestel-Leader graphs $DL(p,q)$, for $p,q\geq 2$, which are interesting examples of vertex-transitive graphs \cite{DL}. 
The wreath product $\Z_p\wr \Z$ has the Diestel-Leader graph $DL(p,p)$ as a Cayley graph (see \cite{woess}).
Therefore we can say that the divergence of Diestel-Leader graphs $DL(p,p)$ is linear. Diestel-Leader graphs with $p\neq q$ were the first example of connected locally finite vertex-transitive graphs which are not quasi-isometric to any Cayley graph \cite{eskin2006quasiisometries}.
We show that $DL(p,q)$ with $p\neq q$ also have linear divergence (Section \ref{section:Diestel-Leader graphs}, Theorem \ref{proposition:DL-graphs}). 
Both Diestel-Leader graphs and Baumslag-Solitar groups $BS(1,p)$ are instances of horocyclic products studied by Ferragut in \cite{ferragut-thesis}. We extend our results by proving that horocyclic products of proper, geodesically complete, Busemann $\delta$-hyperbolic spaces that are uniformly not a quasi-line have linear divergence (Section \ref{section:horocyclic products}, Theorem \ref{thm:horocyclic}).

Wreath products of graphs were defined by Erschler as a generalisation of wreath products of groups \cite{Erschler}. We show that the argument for linearity of the divergence function of wreath products of groups can be adapted to wreath products of graphs $B\wr A$, where the graphs $A$ and $B$ are vertex-transitive and locally finite. This can be generalised to wreath products $B\wr_I A$ with respect to a family of subsets of $A$ under some assumptions (Section \ref{section:wreath products of graphs}).

Let us summarise the results in a statement.
\vspace{2mm}
\def\theorem{\par\noindent{\bf Theorem\ } \ignorespaces}
\def\endtheorem{}

\begin{theorem}
    The following groups have linear divergence:
    \begin{itemize}
        \item wreath products $H\wr F$, where $H$ and $F$ are non-trivial finitely generated groups (Theorem \ref{proposition:wreath});
        \item in fact halo products whose halo is large-scale commutative (Theorem \ref{thm:halo products});

        \item permutational wreath products $H\wr_X F$ where $H$ and $F$ are non-trivial finitely generated groups and the action of $F$ on $X$ has finitely many orbits (Theorem \ref{proposition:pwp});
        \item Houghton groups $\mathcal{H}_m$, $m\geq 2$ (Theorem \ref{houghton});
        \item Baumslag-Solitar groups (Proposition \ref{proposition:bsgroups}).
    \end{itemize}
    Moreover Diestel-Leader graphs, and more generally horocyclic products of proper, geodesically complete, Busemann $\delta$-hyperbolic spaces that are uniformly not a quasi-line, have linear divergence (Theorem \ref{proposition:DL-graphs} and Theorem \ref{thm:horocyclic}).
\end{theorem}
\subsection*{Acknowledgements}The author was funded by the Swiss Government Excellence Scholarship No. 2022.0011 and acknowledges
partial support from Research Preparation Grant – Leading House Geneva (2022).

I am grateful to my supervisor Tatiana Nagnibeda for having suggested and discussed enthusiastically with me this topic of research. I also would like to thank Corentin Bodart for sharing some thoughts on the matter, and an anonymous referee for suggesting possible generalisations. Finally, I would like to thank Tom Ferragut and Davide Perego for discussions about horocyclic products of hyperbolic spaces.

\section{Definition of divergence} 
We use the definition of divergence from \cite{BD14}. For a geodesic metric space $X$, given three points $a$, $b$, $c\in X$ and parameters $\delta\in (0,1)$ and $\gamma\geq~0$, we define the divergence $\Div_\gamma(a,b,c;\delta)$ to be the infimum of lengths of paths that connect $a$ to $b$ outside $B(c,\delta r -\gamma)$, the open ball around $c$ of radius $\delta r-\gamma$, if this exists. We define it to be infinite otherwise. Here $r=~\min \{\dist(c,a),\dist(c,b)\}$. (If the radius is non-positive, we define the ball to be empty.)
\begin{definition}
We define $
\DIV_\gamma(n,\delta)$ to be \begin{equation}\DIV_\gamma(n,\delta)=\label{defdiv}\sup_{\begin{subarray}{l} a,b,c\in X, \\ \dist(a,b)\leq n \end{subarray}}\Div_\gamma(a,b,c;\delta). \tag{$\star$} \end{equation}
\end{definition}
The following equivalence relation on functions $f:\R_{+}\to\R_{+}$ provides the right setting to work with divergence.
\begin{definition}\label{def:eqrel}Given two functions $f,g:\mathbb{R}_{+}\to\mathbb{R}_{+}$ and $A\geq 1$, we say that $f\preceq _{A}g$ if $f(x)\leq Ag(Ax+A)+Ax+A$ for all $x\in\mathbb{R}_{+}$. We define an equivalence relation on such functions as $f\asymp_{A}g$ if $f\preceq _{A}g$ and $g\preceq _{A}f$. We write $f\preceq g$ (respectively $f\asymp g$) when there exists $A\geq 1$ such that $f\preceq_{A}g$ (respectively $f\asymp _{A} g$).
\end{definition}
We notice that if a space has more than one end, then its divergence is infinite. Therefore, in what follows, we restrict to one-ended spaces.

A metric space is \textbf{proper} if all its closed balls are compact and \textbf{periodic} if it is geodesic and there exists a ball whose orbit under $Isom(X)$ covers~$X$.

\begin{proposition}[Druţu, Mozes, Sapir, \cite{DMS10}]\label{welldef}
If a geodesic metric space $X$ is one-ended, proper, periodic, and there exist $\kappa\geq 0$ and $L\geq 1$ such that every point is at distance at most $\kappa$ from a bi-infinite $L$-bi-Lipschitz path, then $\DIV_{\gamma}(n,\delta)$ is independent of $\gamma$ and $\delta$ up to $\asymp$ for any $\delta\leq \frac{1}{1+L^{2}}$ and $\gamma\geq 4\kappa$. 

Moreover, under these conditions on the spaces, $\DIV_{\gamma}(n,\delta)$ is invariant under quasi-isometry up to $\asymp$. 
\end{proposition}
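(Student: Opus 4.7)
The statement has two parts: (a) independence of the parameters $(\gamma,\delta)$ up to $\asymp$ in the stated ranges, and (b) invariance under quasi-isometry. Both rely on the same mechanism. The family of bi-infinite $L$-bi-Lipschitz paths, which is $\kappa$-dense by hypothesis, combined with the cocompact action of $\mathrm{Isom}(X)$ furnished by periodicity, lets one reroute any admissible path around a slightly larger forbidden ball at cost linear in $n$. Because the equivalence relation of Definition~\ref{def:eqrel} absorbs the additive slack $+Ax+A$, such a linear correction is invisible up to $\asymp$.

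For part (a), one direction is immediate monotonicity: if $\delta_{1}\leq \delta_{2}$ and $\gamma_{1}\geq \gamma_{2}$, then $B(c,\delta_{1}r-\gamma_{1})\subseteq B(c,\delta_{2}r-\gamma_{2})$ and hence $\DIV_{\gamma_{1}}(n,\delta_{1})\leq \DIV_{\gamma_{2}}(n,\delta_{2})$ directly from the definition. For the reverse bound I would take any path $P$ avoiding only the smaller ball, identify its first and last crossings of the sphere of radius $\delta_{2}r-\gamma_{2}$ centred at $c$, and replace the enclosed arc by a two-stage detour: at each crossing jump (at cost $O(\kappa)$) to a nearby bi-Lipschitz path, ascend outward to a sphere of radius slightly larger than $\delta_{2}r-\gamma_{2}$, and then link the two lift points by chaining $O(r)$ translates of the fundamental ball supplied by periodicity. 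The supremum in $\DIV_{\gamma}(n,\delta)$ is non-trivial only when $r=O(n)$ (for $c$ much farther from $a,b$ than $n$, any geodesic between $a$ and $b$ already avoids the forbidden ball), so the inserted detour has length $O(n)$, which is absorbed by the $\asymp$-slack.

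For part (b), let $f\colon X\to Y$ be a $(K,C)$-quasi-isometry with coarse inverse $g$. Given a triple $(a',b',c')$ in $Y$ with $\dist(a',b')\leq n$ and an admissible path $P'$ for parameters $(\delta',\gamma')$, I pull $P'$ back via $g$ and fill it in by short geodesics at unit scale; the resulting path in $X$ connects $g(a')$ to $g(b')$ and avoids a ball $B(g(c'),\tfrac{1}{K}(\delta'r'-\gamma')-C)$, so it is admissible in $X$ for some parameters $(\delta,\gamma)$ determined by $K,C,\delta',\gamma'$. Part (a) rescales these to the reference range, yielding $\DIV^{Y}\preceq \DIV^{X}$; the symmetric argument through $f$ closes the $\asymp$-equivalence.

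The main technical obstacle is the sharp threshold $\delta\leq 1/(1+L^{2})$ in part (a). Bi-Lipschitz paths are poorly adapted to radial geometry: along a parameter interval of length $T$, the path can gain radial distance from $c$ at rate only $1/L$ but lose it at rate $L$, so the worst-case radial oscillation on a detour is amplified by a factor $L^{2}$. Balancing this amplification against the radial margin $(1-\delta)r$ that remains outside the forbidden ball produces precisely the numerical bound $\delta\leq 1/(1+L^{2})$; making this balance uniform in $(a,b,c)$ for arbitrary periodic geodesic spaces is the most delicate ingredient of the proof.
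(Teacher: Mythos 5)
First, note that the paper does not prove this proposition at all: it is imported verbatim from \cite{DMS10}, so there is no in-paper argument to compare yours against. Judged on its own terms, your outline gets the easy pieces right (monotonicity in $(\gamma,\delta)$, the reduction to $r=O(n)$, and the standard pull-back-and-interpolate argument for quasi-isometry invariance in part (b), which does reduce correctly to part (a)). Your heuristic for where $1/(1+L^2)$ comes from is also in the right spirit: it is the two-sided Lipschitz bound that forces one of the two rays of a bi-infinite $L$-bi-Lipschitz path near $x$ to stay outside a ball of radius comparable to $\dist(x,c)/(1+L^2)$ --- a quantitative version of Lemma~\ref{Lemma:rays}.

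The genuine gap is in the reverse inequality of part (a), at the step ``link the two lift points by chaining $O(r)$ translates of the fundamental ball supplied by periodicity.'' Periodicity gives coarse homogeneity, not a way to travel around a forbidden ball: connecting two points lying on the sphere of radius $\delta_2 r-\gamma_2$ about $c$ by a path that stays outside $B(c,\delta_2 r-\gamma_2)$ \emph{is} the divergence problem at that scale, and it cannot in general be solved at cost $O(r)$. If it could, every one-ended proper periodic space with $\kappa$-dense bi-Lipschitz lines would have linear divergence, which is false (one-ended hyperbolic groups satisfy all the hypotheses and have exponential divergence). So your ``reroute the enclosed arc'' surgery is circular. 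The argument in \cite{DMS10} avoids this by never modifying a given path: instead one pushes the \emph{endpoints} $a$ and $b$ radially outward along the escaping rays of nearby bi-Lipschitz paths to points $a_1,b_1$ whose distance from $c$ is large enough that $\delta' \dist(c,a_1)-\gamma' \geq \delta r-\gamma$, checks (using $\delta\leq 1/(1+L^2)$, $\gamma\geq 4\kappa$) that these escape paths themselves avoid $B(c,\delta r-\gamma)$ and have length $O(n)$, and then invokes $\DIV_{\gamma'}(O(n),\delta')$ as a black box to join $a_1$ to $b_1$ outside the larger ball. That yields $\DIV_{\gamma}(n,\delta)\leq O(n)+\DIV_{\gamma'}(O(n),\delta')$, which is exactly what $\preceq$ absorbs; your version needs to be restructured along these lines before part (b) can legitimately lean on it.
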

Let now $G$ be a finitely generated group. A choice of finite generating set $S$ makes $G$ into a geodesic metric space which is proper and periodic. We will assume throughout the paper that it is one-ended. The conditions of Proposition \ref{welldef} are satisfied with $L=1$ and $\kappa=\frac{1}{2}$. Therefore divergence is a quasi-isometry invariant. In particular it does not depend on the choice of $S$ and we can talk about the divergence of finitely generated groups.

If the space is a connected, one-ended, locally finite and vertex-transitive graph, then the conditions of Proposition \ref{welldef} are satisfied with $L=1$ and $\kappa=\frac{1}{2}$, and we can also fix the third point $c$ in the definition. As a result, $\DIV_{\gamma}(n,\delta)$ simplifies as $\sup\limits_{\begin{subarray}{l} a,b\in X, \\ \dist(a,b)\leq n \end{subarray}}\Div_\gamma(a,b,x_0;\delta)$, where $x_0\in X$ is a fixed vertex of the graph.

We can make a further simplification in formula $(\star)$ and take the supremum over points $a$ and $b$ such that $\norm{a}=\norm{b}$, where $\norm{x}=\dist(x,x_0)$ for any point $x$ in the graph. Indeed, if $\dist(a,b)\leq n$, and $\norm{b}\geq \norm{a}$, let $b'$ be at distance $\norm{a}$ from $x_0$ on a geodesic segment that connects $x_0$ to $b$. Then a shortest path from $a$ to $b$ outside $B(x_0,\delta r-\gamma)$ has the same length up to $\asymp$ as a shortest path from $a$ to $b'$ outside $B(x_0,\delta r-\gamma)$. Since $\dist(a,b)\leq n$, we have $\norm{b}\leq r+n$ and $\dist(b',b)=\norm{b}-r\leq n$; hence, $\dist (a,b')\leq 2n$ . Therefore, if we consider divergence up to $\asymp$, we can restrict to $\norm{a}=\norm{b}=r$, and get to the definition 
\begin{equation}\label{defsimpl1}
\DIV_\gamma(n,\delta)=
\sup_{\begin{subarray}{l} a,b\in X, \\ \dist(a,b)\leq n, \\ |a|=|b| \end{subarray}}
\Div_\gamma(a,b,x_0;\delta). 
\tag{$\star\star$} 
\end{equation}

Moreover, we can further modify the definition of divergence to 
\begin{equation}\label{defsimpl}
\DIV'_\gamma(n,\delta)=
\sup_{\begin{subarray}{l} a,b\in X, \\ |a|=|b|=n \end{subarray}}
\Div_\gamma(a,b,x_0;\delta). 
\tag{$\star\star\star$} 
\end{equation}
Let us show that $\DIV_\gamma(n,\delta)$ and $\DIV'_\gamma(n,\delta)$ are equivalent.
We can assume that $r\leq \frac{3n}{2}$ in $\DIV_\gamma(n,\delta)$ for $\Div_{\gamma}(a,b,x_0;\delta)$ not to be trivially linear by triangle inequality. But then $\Div_\gamma(a,b,x_0;\delta)$ where $\dist(a,b)\leq n$ and $\norm{a}=\norm{b}=\omega n$ with $\omega\leq \frac{3}{2}$ is considered in 
$\DIV'_\gamma(\omega n,\delta)$, and so $\DIV_\gamma(n,\delta)\preceq \DIV'_\gamma(n,\delta)$. Similarly, $\Div_\gamma(a,b,x_0;\delta)$ with $|a|=|b|=n$ is considered in $\DIV_\gamma(2 n,\delta)$, and so $\DIV'_\gamma(n,\delta)\preceq \DIV_\gamma(n,\delta)$.

From now on, we will use (\ref{defsimpl}) as definition for $\DIV_\gamma(n,\delta)$.
In the case of groups, the group identity $1$ will serve as the fixed point $x_0$. 

In Sections \ref{section:houghton groups}, \ref{section:halo products} and \ref{section:baumslag-solitar groups} we will make use of the following lemma.
\begin{lemma}\label{Lemma:rays}Let $X$ be a geodesic metric space and $\mathfrak{p}$ a bi-infinite geodesic through a point $x$ such that $x\notin {B}(y,\epsilon)$ for some $y\in X$ and $\epsilon >0$. Then not both the positive and negative rays of $\mathfrak{p}$ intersect $B(y,\frac{\epsilon}{2})$.
\end{lemma}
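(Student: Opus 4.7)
The plan is to argue by contradiction: assume that both rays meet the smaller ball $B(y,\tfrac{\epsilon}{2})$, and derive conflicting upper and lower bounds on the distance between two chosen witnesses.

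Concretely, I would pick a point $p$ on the positive ray of $\mathfrak{p}$ with $\dist(p,y)<\tfrac{\epsilon}{2}$ and a point $q$ on the negative ray with $\dist(q,y)<\tfrac{\epsilon}{2}$. Since $p$ and $q$ lie on opposite sides of $x$ on a bi-infinite geodesic, the geodesic restricts to a geodesic from $p$ to $q$ passing through $x$, so $\dist(p,q)=\dist(p,x)+\dist(x,q)$. This is the key structural input.

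Next I would obtain a lower bound on each of $\dist(p,x)$ and $\dist(x,q)$. Since $B(y,\epsilon)$ is the open ball and $x\notin B(y,\epsilon)$, we have $\dist(x,y)\geq \epsilon$. The triangle inequality then gives
\[
\dist(x,p)\geq \dist(x,y)-\dist(y,p) > \epsilon - \tfrac{\epsilon}{2}=\tfrac{\epsilon}{2},
\]
and symmetrically $\dist(x,q)>\tfrac{\epsilon}{2}$. Combining with the geodesic identity yields $\dist(p,q)>\epsilon$. On the other hand, the triangle inequality through $y$ produces the opposite bound $\dist(p,q)\leq \dist(p,y)+\dist(y,q)<\epsilon$, a contradiction.

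There is no real obstacle here; the only thing to watch is the convention on open versus closed balls, which forces the strict inequality $\dist(x,y)\geq\epsilon$ and makes the argument fit together with a single factor of $\tfrac{1}{2}$ to spare. Once that is pinned down, the proof is a direct two-inequality comparison that uses nothing beyond the definition of a bi-infinite geodesic and the triangle inequality.
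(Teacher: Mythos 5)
Your proof is correct and is essentially the paper's argument rearranged: both run by contradiction using the additivity of distance along the bi-infinite geodesic through $x$ together with the triangle inequality via $y$, the paper deducing that one of the two witnesses must be within $\tfrac{\epsilon}{2}$ of $x$ and contradicting $x\notin B(y,\epsilon)$, while you derive the two conflicting bounds on $\dist(p,q)$ directly. (Minor quibble: $\dist(x,y)\geq\epsilon$ is a non-strict inequality, but the strictness you need is already supplied by $\dist(y,p)<\tfrac{\epsilon}{2}$, so the argument goes through.)
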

\begin{proof}
    Suppose for a contradiction that there are $a,b$ in different rays of $\mathfrak{p}$ such that $a$ and $b$ are in $B(y,\frac{\epsilon}{2})$. So $\dist(a,b)<\epsilon$ by triangle inequality, which implies $\dist(a,x)<\frac{\epsilon}{2}$ or $\dist(b,x)<\frac{\epsilon}{2}$, being $\mathfrak{p}$ a geodesic. Suppose without loss of generality that $\dist(a,x)<\frac{\epsilon}{2}$. Consider the geodesic triangle $[a,x,y]$. By triangle inequality, we have $\dist(x,y)\leq\dist(x,a)+\dist(a,y)< \epsilon$; but $x\notin {B}(y,\epsilon)$, yielding a contradiction.
\end{proof}
\section{Divergence function of wreath products}\label{section:wreath products}
In this section, we study the divergence function of wreath products of groups $H\wr F$, showing that is it linear. Indeed, this was known, for example, for the lamplighter group $\Z_{2}\wr \Z$, since it is solvable and all finitely generated non-virtually cyclic solvable groups have linear divergence (\cite{DMS10} and \cite{DS05}).

Given two groups $H$ and $F$, the \textbf{wreath product} $H\wr F$ is the semidirect product $\big{(}\bigoplus\limits_{F}H\big{)}\rtimes F$, where the action of $F$ on $\bigoplus\limits_{F}H=H^F$ is given by $$f\cdot k(x)=({}^{f}k)(x):=k(f^{-1}x)$$ and the product is $(k,f)(k',f')=(k {}^{f}k', ff')$, for all $x,f, f'\in F$ and $k,k'\in~\bigoplus\limits_{F}H$.
We assume that the groups $H$ and $F$ are non-trivial.
\begin{thm}\label{proposition:wreath}
If $H$ and $F$ finitely generated groups, then the wreath product $G=H\wr F$ has linear divergence. 
\end{thm}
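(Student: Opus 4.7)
The plan is to prove $\DIV(n) \preceq n$ by constructing, for every pair $a, b \in G$ with $|a| = |b| = n$, an explicit path from $a$ to $b$ in the Cayley graph of $G$ of length $O(n)$ that avoids $B_G(1, \delta n - \gamma)$ for some fixed $\delta, \gamma$ allowed by Proposition~\ref{welldef}; together with the automatic lower bound this gives $\DIV(n) \asymp n$. For the question to be non-trivial $G$ must be one-ended, which forces $F$ to be infinite --- the case $F$ finite (necessarily with $H$ infinite and $|F|\geq 2$ for $G$ to be infinite and one-ended) reduces via commensurability of $G$ with $H^{|F|}$ to classical linearity for direct products of infinite groups.

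Fix a non-identity $h \in H$. Write $a = (k_a, f_a)$ and $b = (k_b, f_b)$. I choose anchor points $p_a, p_b \in F$ with $|p_a|_F, |p_b|_F = 10n$ along outward rays emanating from $f_a$ and $f_b$ respectively, and define $\tilde a = (k_a + e_{p_a}(h), f_a)$ and $\tilde b = (k_b + e_{p_b}(h), f_b)$, where $e_x(h)$ denotes the lamp configuration taking value $h$ at $x$ and trivial elsewhere. The detour $a \to \tilde a \to \tilde b \to b$ has three legs: Leg 1 walks the cursor from $f_a$ to $p_a$, applies the word for $h$, walks back; Leg 2 walks to $p_b$ and lights that lamp, tours the cursor through $\mathrm{supp}(k_a) \triangle \mathrm{supp}(k_b)$ converting $k_a$ to $k_b$, walks to $p_a$ and extinguishes its lamp, then walks to $f_b$; Leg 3 is symmetric to Leg 1. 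All cursor trips are bounded by $|f_a|_F + |f_b|_F + |p_a|_F + |p_b|_F = O(n)$, toggles cost $|h|_H = O(1)$, and the toggling cost in Leg 2 is bounded by $|a| + |b| = 2n$, so each leg has length $O(n)$.

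The key lower bound is: if $(k, f) \in G$ has a lit lamp at some $x \in F$, then $|(k,f)|_G \geq |x|_F$, since any word for $(k, f)$ must contain a cursor subpath from $1$ visiting $x$. In Leg 2 at least one of the $p_a, p_b$-lamps is always lit (by ordering: light $p_b$ before extinguishing $p_a$), giving $|(k, f)|_G \geq 10n$ throughout. The return half of Leg 1 and the outgoing half of Leg 3 also keep their respective anchor lamp lit and are automatically safe.

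The main obstacle is the outgoing half of Leg 1 (and symmetrically the return half of Leg 3): the cursor walks from $f_a$ to $p_a$ with lamp configuration still $k_a$. I would handle this with a case split on $|f_a|_F$. If $|f_a|_F > n/4$, I choose $p_a$ along the outward ray of a bi-infinite geodesic through $f_a$ in $F$ (which exists since $F$ is infinite and finitely generated); one of the two rays of this geodesic must stay outside $B_F(1, |f_a|_F/2) \supseteq B_F(1, n/8)$, so $|(k_a, f)|_G \geq |f|_F > n/8$ throughout the walk. If $|f_a|_F \leq n/4$, I combine the cursor-translation bound $|(k_a, f)|_G \geq |(k_a, 1)|_G - |f|_F \geq n - |f_a|_F - |f|_F$ with the trivial $|(k_a, f)|_G \geq |f|_F$ to obtain $|(k_a, f)|_G \geq \max(3n/4 - |f|_F,\, |f|_F) \geq 3n/8$ for every $f$. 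Choosing $\delta = 1/8$ then ensures the entire detour stays outside $B_G(1, \delta n - \gamma)$.
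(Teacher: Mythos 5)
Your overall strategy is sound and is essentially the paper's: your key lower bound (a lit lamp at $x$ forces $\norm{g}_G\geq\norm{x}_F$) is the paper's Lemma \ref{lemma:lamps}, and planting an anchor lamp far away so that the middle portion of the path is automatically safe is exactly what the paper does by routing everything through the fixed target $g^{*}=f^{*}h_0$. The differences are organisational: you join $a$ to $b$ directly through two anchors instead of through a common $g^{*}$; for the unprotected initial excursion you split on $\norm{f_a}_F$, using a bi-infinite geodesic in $F$ (the paper reserves that device, Lemma \ref{Lemma:rays}, for the Houghton and Baumslag--Solitar sections, and instead handles large $\norm{f}_F$ by lighting the lamp at $f$ itself, its Case 3); and your triangle-inequality estimate $\norm{(k_a,f)}_G\geq\max(3n/4-\norm{f}_F,\norm{f}_F)\geq 3n/8$ when $\norm{f_a}_F\leq n/4$ is a cleaner substitute for the paper's Case 4 computation. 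All of these steps check out.

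There is, however, a genuine gap in the cost accounting of Leg 2. The cursor travel of the tour through $\{x\mid k_a(x)\neq k_b(x)\}$ is covered neither by your bound $\norm{f_a}_F+\norm{f_b}_F+\norm{p_a}_F+\norm{p_b}_F$ (which only accounts for the trips between those four points) nor by the bound $|a|+|b|$ (which only accounts for the $H$-letters). For an arbitrary ordering of the up to $2n$ points of $\supp{k_a}\cup\supp{k_b}$, each within $F$-distance $n$ of the identity but pairwise as far as $2n$ apart, the cursor travel can be of order $n^2$. You must tour them in the right order: an optimal word for $a$ yields a walk in $F$ from $1$ through $\supp{k_a}$ to $f_a$ of length at most $n$, likewise for $b$, and concatenating these two walks (with a return to $1$ in between) gives an $O(n)$ tour --- this is exactly what the paper's explicit subpaths $e_1,h_1^{-1},\dots,e_m,h_m^{-1}$ accomplish. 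With that observation inserted, your argument goes through. Two smaller points: the set to visit is $\{x\mid k_a(x)\neq k_b(x)\}$, which sits inside the union of the supports rather than their symmetric difference; and you should either exclude the coincidence $p_a=p_b$ (or skip the second lighting in that case) so that the two anchor contributions at a common point cannot cancel when $h$ has order two.
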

Let $S$ and $T$ be finite symmetric generating sets for $H$ and $F$ respectively. Let $\delta_h^f:F\to H$ be the element of $H^F$ that takes the value $h$ at $f$ and $1_H$ elsewhere, for any $h\in H$ and $f\in F$, and let $\delta_h:=\delta_h^{1_F}$. Consider $\{(1,t)\mid ~t\in ~T\}\cup~\{(\delta_s,1_F)\mid s\in S\}$ as generating set for $G$, where $1:F\to H$ takes value $1_H$ everywhere in $F$. 

To simplify the notation, we write $f$ for $(1,f)$ and $h$ for $(\delta_h,1_F)$, for any $f\in F$ and $h\in H$ . Then the chosen generating set for $G$ is $T\cup S$. Define the metrics $\norm{.}_{H}$, $\norm{.}_{F}$ and $\norm{.}_{G}$ as $\norm{.}_{S}$, $\norm{.}_{T}$ and $\norm{.}_{T\cup S}$ respectively. We will sometimes call \textit{base group} the group $F$, \textit{lamp group} the group $H$ and \textit{lamps} the elements of $F$. We see elements of $H\wr F$ as moving a cursor through some lamps in $F$ and changing their \textit{brightness}, according to the elements of $H$. For example, the product $fh$ corresponds to $(1,f)(\delta_h, 1_F)=(\delta_h^f,f)$ and means reaching the element $f$ in the base group $F$ and then applying $h$ to that lamp.

An element $g$ of $G$ is a pair $(k,f)$, where $k:F\to H$ is a finitely supported function  and $f\in F$. Let $\{x_1,\dots, x_m\}\subseteq F$ be the support of $k$, and let $h_i$ be $k(x_i)\in H$. 
Then $k=\delta_{h_1}^{x_1}\dots \delta_{h_m}^{x_m}$ and our element $g$ can be written as $x_1 h_1 (x_1)^{-1}x_2 h_2 (x_2)^{-1}\dots x_m h_m (x_m)^{-1}
f$.
This corresponds to reaching the $F$-positions (\textit{lamps}) where we want to make some steps along $H$ (\textit{that we want to turn on}), making some $H$-moves  at those positions, and finally going to the desired final $F$-position. 
Any way of writing $g$ in terms of the generators $T\cup S$ consists of a walk in $F$ through the lamps that we wish to turn on, interrupted by $H$-steps at the lamps, and a final path in $F$ to reach the terminal lamp. Amongst such representations of $g$, there will be an optimal one (see \cite{Parry}), which we denote $e_1 h_1 e_2 h_2\dots e_m h_m e_{m+1}$, where $e_i$ is a path in $F$, $h_i\in H\setminus\{1\}$ for all $i$, and only $e_1$ and $e_{m+1}$ can possibly be the identity. By \textit{optimal} we mean that the walk in $F$ is of minimal length such that it passes through $x_1,\dots, x_m$ and ends at $f$. Therefore, $\norm{g}_G=\sum\limits_{i=1}^{m+1} \norm{e_i}_F +\sum\limits_{j=1}^{m}\norm{h_j}_H$. Also, $f=e_1 e_2\dots e_{m+1}$ and $k=\delta_{h_1}^{x_1}\dots \delta_{h_m}^{x_m}$, where $x_i=e_1 e_2\dots e_i$.

In the proof, we will use the following fact.
\begin{lemma}\label{lemma:lamps}If an element $g\in G$ has the lamp $x\in F$ on, then $\norm{g}_G\geq \norm{x}_F$.
\end{lemma}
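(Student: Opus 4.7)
The plan is to exploit the structure of optimal representations of elements of $G = H \wr F$ described just before the lemma. Writing $g = (k,f)$, the assumption that lamp $x \in F$ is on means $k(x) \neq 1_H$, so $x$ lies in the (finite) support of $k$. Hence $x$ appears as one of the $x_i$ in any enumeration of the support.

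First I would fix an optimal representation of $g$ as
\[
g = e_1 h_1 e_2 h_2 \cdots e_m h_m e_{m+1},
\]
with $x_i = e_1 e_2 \cdots e_i$ enumerating the lamps that are turned on and $\|g\|_G = \sum_{i=1}^{m+1}\|e_i\|_F + \sum_{j=1}^{m}\|h_j\|_H$. Since $x$ is on, there exists some index $i \in \{1,\dots,m\}$ with $x = x_i$. Because $x_i$ is a product of $i$ elements of $F$ of total $F$-length $\sum_{j=1}^{i}\|e_j\|_F$, the triangle inequality in $F$ gives $\|x\|_F = \|x_i\|_F \leq \sum_{j=1}^{i}\|e_j\|_F$.

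I would then conclude by chaining the inequalities:
\[
\|g\|_G \;=\; \sum_{j=1}^{m+1}\|e_j\|_F + \sum_{j=1}^{m}\|h_j\|_H \;\geq\; \sum_{j=1}^{i}\|e_j\|_F \;\geq\; \|x\|_F.
\]
The only point requiring care is the existence of an optimal representation in the stated form, but this has already been cited from Parry's work in the paragraphs preceding the lemma, so it may be invoked directly. There is no real obstacle here: the lemma is essentially the statement that any generating word for $g$ must, at some moment, move the $F$-cursor to $x$ in order to apply a nontrivial lamp move, and such a walk in $F$ has length at least $\|x\|_F$.
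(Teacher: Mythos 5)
Your proof is correct and is precisely the formalization of the paper's one-line argument ("we need to reach the lamp in order to switch it on"): you use the optimal representation $g=e_1h_1\cdots e_mh_me_{m+1}$ and the triangle inequality in $F$ to bound $\norm{x}_F$ by the length of the prefix of the walk reaching $x$. Same approach, just spelled out in more detail than the paper bothers to.
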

This is because we need to reach the lamp in order to switch it on.
\begin{proof}[Proof of Theorem \ref{proposition:wreath}]
If $F$ is finite, for the wreath product $G$ to be infinite, $H$ has to be infinite. In that case, $G$ has a direct product of infinite groups as subgroup of finite index, so it has linear divergence. We assume now that $F$ is infinite.

We are going to show that any two elements $g$ and $g'$ at $G$-distance $n$ from the identity  
can be connected by a path of length linear in $n$ that avoids $B_G(1_G,\frac{n}{6})$. We may fix an element $g^{*}\in G$ with $\norm{g^{*}}_G=n$ and show that any $g\in G$ of length $n$ can be connected to $g^{*}$ by a path of length linear in $n$ outside $B_G(1_G,\frac{n}{6})$. Therefore, for any $g,g'\in G$ of length $n$, we can concatenate such paths and get a path from $g$ to $g'$ that avoids $B_G(1_G,\frac{n}{6})$ and whose length is linear in $n$.

Let $f^{*}$ be a fixed element in $F$ such that $\norm{f^{*}}_F=n-1$, and let $f_1,\dots, f_{n-1}$ be elements of $T$ such that $f^{*}=f_1 \dots f_{n-1}$. 
By our choice of generating set for $G$, it follows that $\norm{f^{*}}_G=n-1$. Let $h_0$ be a fixed generator of $H$ (so $\norm{h_0}_H=1$). Then $g^{*}:=f^{*}h_0$ is a fixed element of $G$ with $\norm{g^{*}}_G=n$. We will prove that any $g\in G$ of length $n$ can be connected linearly to $g^{*}$ outside $B_G(1_G,\frac{n}{6})$.

As above, we see $g=(k,f)$ as a walk on the base group $F$ where we alternate steps in $F$ with \textit{turning on lamps} in $H$ and we have a final walk in $F$ to reach the final position, write it as $g=e_1 h_1 e_2 h_2\dots e_m h_m e_{m+1}$, and assume that the walk in $F$ is optimal, so that $\norm{g}_G=\sum\limits_{i=1}^{m} \norm{e_i}_F +\sum\limits_{j=1}^{m+1} \norm{h_j}_H$.

We divide the proof of the theorem into several cases, depending on $g$.

\begin{enumerate}
\item[Case 1] If $f^{*}$ is one of the lamps that are already on in $g$, for metric reasons it is the only one and $g=f^{*}h$ for some generator $h$ of $H$. We simply change its brightness to $h_0$ if needed (see Figure \ref{Case1}). The path is $g=f^{*}h, f^{*}, f^{*}h_0$.
\item[Case 2]\label{case:easy} If the lamp $f^{*}$ is not on, and one of the lamps that $g$ has on (say $x_j=~e_1\dots e_j$) has $F$-metric at least $\frac{n}{6}$, we can move in $F$ without entering the ball $B_G(1_G,\frac{n}{6})$ by Lemma \ref{lemma:lamps}. We start by reaching $f^{*}$, then we apply $h_0$ (see Figure \ref{Case2}); next we turn the lamps of $g$ off.

More precisely, starting from $g$, we go back along $f$, which takes at most $n$ steps and brings the cursor to the identity. Then reach $f^{*}$ via $f_1,\dots, f_{n-1}$. Apply $h_0$, go back to the identity, then reach $f$, and apply $g^{-1}$. Finally multiply by $f^{*}$, to reach $g^{*}$. This takes at most $6n$ steps.  
The path is the following (starting from $g$ and given by subpaths in $F$ and $H$, which we interpret as being sequences of generators in $T$ and $S$ respectively):
\begin{flalign*}e_{m+1}^{-1},e_m^{-1},\dots, e_1^{-1},
f_1,\dots, f_{n-1}, h_0, f_{n-1}^{-1},\dots ,f_1^{-1},\\e_1, \dots , e_{m}, h_m^{-1}, e_m^{-1}, h_{m-1}^{-1},\dots ,h_1^{-1},  e_1^{-1}, f_1, \dots , f_{n-1}.
\end{flalign*}
The final element is $f^{*}h_0$, since its only lamp that is on is $f^{*}$, it has brightness $h_0$, and the final position of the cursor is $f^{*}$.
At any step of this path a lamp with $F$-length at least $\frac{n}{6}$ is on, so we are always outside $B_G(1_G, \frac{n}{6})$ by Lemma \ref{lemma:lamps}.

\item[Case 3]\label{case 3} If all the lamps that are on in $g$ have $F$-metric less that $\frac{n}{6}$, and $\norm{f}_F\geq\frac{n}{6}$, then apply $h_0$ to turn on the lamp $f$ (see Figure \ref{Case3}). If $f=f^{*}$, then for metric reasons only another lamp can be on in $g$ and $m=1$. We proceed by turning it off and reach $f^{*}h_0$ via $e_{2}^{-1}, h_1^{-1}, e_2$. 
If $f\neq f^{*}$, we can reduce to Case 2. 
This will take at most $6n$ steps. At each step (except the initial one) we have $f$ or $f^{*}$ on, meaning that we are outside $B_G(1_G,\frac{n}{6})$ by Lemma \ref{lemma:lamps}, since $\norm{f}_F,\norm{f^{*}}_F\geq\frac{n}{6}$.

\item[Case 4] If all the lamps that are on have $F$-metric less that $\frac{n}{6}$, and $\norm{f}_F<\frac{n}{6}$, we have that $\frac{n}{6}> \norm{f}_F=\norm{e_1 e_2\dots e_{m+1}}_F\geq \norm{e_{m+1}}_F-\norm{e_1 e_2 \dots e_{m}}_F$, so $\norm{e_{m+1}}_F< \norm{e_1 e_2 \dots e_{m}}_F+\frac{n}{6}<\frac{n}{3}$, since $e_1 e_2 \dots e_{m}$ is a lamp. 
Moreover, from $$n=\norm{g}_G=\sum\limits_{i=1}^{m} \norm{e_i}_F +\sum\limits_{j=1}^{m} \norm{h_j}_H+\norm{e_{m+1}}_F,$$ 
we have that $\sum\limits_{i=1}^{m} \norm{e_i}_F +\sum\limits_{j=1}^{m} \norm{h_j}_H=n-\norm{e_{m+1}}_F>\frac{2n}{3}$. 

This enables us to move in $F$ without getting too close to the identity in the $G$-metric. If we make a few steps in the base group $F$, say $w_1,\dots ,w_k$, where $w_i\in T$, we have that for each $1\leq i\leq k$, 
\begin{equation}\label{eq:ineq}
\norm{gw_1\dots w_i}_{G}\geq \sum\limits_{i=1}^{m} \norm{e_i}_F +\sum\limits_{j=1}^{m} \norm{h_j}_H-2\frac{n}{6}> \frac{n}{6},
\end{equation}
since the lamps of $g$ remain on, and $e_1,e_2,\dots ,e_{m+1}$ is a shortest walk that turns them on and reaches $f$. After moving in $F$, we are no longer reaching $f$, so our walk through the lamps might be shorter, but it cannot be shorter than $\sum\limits_{i=1}^{m} \norm{e_i}_F +\sum\limits_{j=1}^{m} \norm{h_j}_H$ minus the largest pairwise distance between the lamps.

We therefore proceed by reaching $1_F$ in $F$ and then $f^{*}$, and applying $h_0$ (see Figure \ref{Case4}).
The first part of the path is given by $$e_{m+1}^{-1},  e_{m}^{-1},\dots, e_{1}^{-1}.$$  
Then, we apply $f_{1},\dots, f_{n-1}, h_0$. 
Next, the path $f_{n-1}^{-1},\dots ,f_1^{-1},$\\$ e_1, h_1^{-1}, \dots, e_m, h_m^{-1}$ turns off all the other lamps. 
Finally, we go back to $f^{*}$ via $e_m^{-1}, \dots, e_1^{-1},f_1,\dots, f_{n-1}$ so that we reach the element $f^{*}h_0=g^{*}$.

The entire path has length at most $6n$, and is entirely outside  $B_G(1_G,\frac{n}{6})$ because of the inequality (\ref{eq:ineq}) first, and of $f^{*}$ being on later.
\end{enumerate}

\begin{figure}\caption{Case 1: The path corresponds to changing brightness to the lamp $f^{*}$.}\label{Case1}
\begin{tikzpicture}
 \draw [draw=black] (0,2.5) rectangle (4,4.5);
 \node[text width=1cm] at (0,2.5)  {F};
\fill[color=orange!] (3.5,3) circle (1ex);
\draw [draw=blue] (0.3,2.8) node {$1$}-- (3.5,3) node {$f^{*}$};
 
\draw [draw=black] (0,0) rectangle (4,2);
\node[text width=1cm] at (0,0)  {F};
 
\fill[color=yellow!] (3.5,0.5) circle (1ex);

\draw [draw=blue] (0.3,0.3) node {$1$} -- (3.5,0.5) node {$f^{*}$};

\end{tikzpicture}
\end{figure}
\begin{figure}\caption{Case 2: The path consists in reaching lamp $f^{*}$ and turning it on. This path avoids $B_G(1_G,\frac{n}{6})$ as one of the lamps that $g$ has on has distance at least $\frac{n}{6}$ from $1_F$.}\label{Case2}
\begin{tikzpicture}
 \draw [draw=black] (0,2.5) rectangle (4,4.5) ;
 \node[text width=1cm] at (0,2.5) 
    {F};
\fill[color=yellow!] (0.5,3.5) circle (1ex);
\fill[color=yellow!] (1,4) circle (1ex);
\fill[color=yellow!] (2,4) circle (1ex);
\draw [draw=blue] (0.3,2.8) node {$1$}-- (0.5,3.5) node {$x_1$}
            -- (1,4) node {$x_2$} -- (2,4) node {$x_m$}-- (3,3.5) node {$f$};
\draw [dashed][draw=blue] (0.3,2.8) -- (3.5,3) node {$f^{*}$};
\node[text width=1cm] at (0,0)  {F};

\draw [draw=black] (0,0) rectangle (4,2);
 \fill[color=yellow!] (2,1.5) circle (1ex);
\fill[color=yellow!] (3.5,0.5) circle (1ex);
\fill[color=yellow!] (1,1.5) circle (1ex);
\fill[color=yellow!] (0.5,1) circle (1ex);
\draw [dashed][draw=blue] (0.3,0.3) node {$1$} -- (0.5,1) node {$x_1$}
            -- (1,1.5) node {$x_2$} -- (2,1.5) node {$x_m$}-- (3,1) node {$f$};
\draw [draw=blue] (0.3,0.3) -- (3.5,0.5) node {$f^{*}$};

\end{tikzpicture}
\end{figure}

\begin{figure}\caption{Case 3: The path consists in turning on lamp $f$, and then reaching $f^{*}$ and turning it on. This path avoids $B_G(1_G,\frac{n}{6})$ since $\norm{f}_F\geq \frac{n}{6}$.}\label{Case3}
\begin{tikzpicture}
 \draw [draw=black] (0,2.5) rectangle (4,4.5);
 \node[text width=1cm] at (0,2.5)  {F};
\fill[color=yellow!] (0.5,3.5) circle (1ex);
\fill[color=yellow!] (1,4) circle (1ex);
\fill[color=yellow!] (2,4) circle (1ex);
\draw [draw=blue] (0.3,2.8) node {$1$} -- (0.5,3.5) node {$x_1$}
            -- (1,4) node {$x_2$} -- (2,4) node {$x_m$}-- (3,3.5) node {$f$};
\draw [dashed][draw=blue] (0.3,2.8) -- (3.5,3) node {$f^{*}$};
 
\draw [draw=black] (0,0) rectangle (4,2);
\node[text width=1cm] at (0,0)  {F};
 \fill[color=yellow!] (3,1) circle (1ex);
\fill[color=yellow!] (3.5,0.5) circle (1ex);
\fill[color=yellow!] (2,1.5) circle (1ex);
\fill[color=yellow!] (1,1.5) circle (1ex);
\fill[color=yellow!] (0.5,1) circle (1ex);

\draw [dashed][draw=blue] (0.3,0.3) node{$1$} -- (0.5,1) node {$x_1$}
            -- (1,1.5) node {$x_2$} -- (2,1.5) node {$x_m$}-- (3,1) node {$f$};
\draw [draw=blue] (0.3,0.3) -- (3.5,0.5) node {$f^{*}$};
\end{tikzpicture}
\end{figure}
\begin{figure}\caption{Case 4: The path consists in reaching lamp $f^{*}$ and turning it on. We keep all the other lamps on in order to avoid $B_G(1_G,\frac{n}{6})$.}\label{Case4}
\begin{tikzpicture}
 \draw [draw=black] (0,2.5) rectangle (4,4.5);
 \node[text width=1cm] at (0,2.5)  {F};
\fill[color=yellow!] (0.5,3.5) circle (1ex);
\fill[color=yellow!] (1,4) circle (1ex);
\fill[color=yellow!] (2,4) circle (1ex);
\draw [draw=blue] (0.3,2.8) node {$1$} -- (0.5,3.5) node {$x_1$}
            -- (1,4) node {$x_2$} -- (2,4) node {$x_m$}-- (3,3.5) node {$f$};
\draw [dashed][draw=blue] (0.3,2.8) -- (3.5,3) node {$f^{*}$};
 
\draw [draw=black] (0,0) rectangle (4,2);
\node[text width=1cm] at (0,0)  {F};
 \fill[color=yellow!] (2,1.5) circle (1ex);
\fill[color=yellow!] (3.5,0.5) circle (1ex);
\fill[color=yellow!] (1,1.5) circle (1ex);
\fill[color=yellow!] (0.5,1) circle (1ex);
\draw [dashed][draw=blue] (0.3,0.3) node{$1$}-- (0.5,1) node {$x_1$}
            -- (1,1.5) node {$x_2$} -- (2,1.5) node {$x_m$}-- (3,1) node {$f$};
\draw [draw=blue] (0.3,0.3) -- (3.5,0.5) node {$f^{*}$};
\end{tikzpicture}
\end{figure}
\end{proof}

\section{Divergence function of permutational wreath products}\label{section:permutational wreath product}
We take now two groups, $H$ and $F$, and we let $F$ act on a set $X$. 
The \textbf{permutational wreath product} $H\wr_X F$ is the semidirect product $\big{(}\bigoplus\limits_{X}H\big{)}\rtimes F$ with elements $(k,f)$, where $k:X\to H$ is a finitely supported function and $f\in F$. The action of $F$ on $\big{(}\bigoplus\limits_{X}H\big{)}=H^X$ is given by $f\cdot k(x)=({}^{f}k)(x)=k(f^{-1}\cdot x)$ for all $x\in X$, $f\in F$ and $k\in H^F$; the product is $(k,f)(k',f')=(k {}^{f}k', ff')$. This construction generalises the usual wreath product in the case when $F$ acts on a set other than itself. 

If $F$ and $H$ are finitely generated, and the $F$-action on $X$ is quasi-transitive (i.e. it has a finite number of orbits), then $G=H\wr_X F$ is finitely generated. This is because, if $T$ and $S$ are finite generating sets for $F$ and $H$ respectively, and $\{y_0,\dots, y_q\}\subseteq~X$ is a set of base points for the orbits, then $\{(\delta_{s}^{y_i},1_F)\mid~s\in~S, 0\leq i\leq~q\}\cup\{(1,t)\mid~t\in T\}$ is a generating set for $G$. Here $\delta_{h}^{x}:X\to H$ takes the value $h$ at $x$ and $1_H$ elsewhere on $X$, and $1:X\to H$ takes the value $1_H$ everywhere on $X$. We assume that $H$ is non-trivial and $|X|\geq 2$ (if $|X|=1$, we get $H\times F$). We have the following result.
\begin{thm}\label{proposition:pwp} If $H$ and $F$ are finitely generated groups, $F$ acts on a set $X$ with finitely many orbits, and $H$ or $X$ is infinite, then $H\wr_X F$ has linear divergence.
\end{thm}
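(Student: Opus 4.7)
The plan is to split into the case $X$ finite and the case $X$ infinite.

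\textbf{When $X$ is finite}, the argument is a direct-product reduction. The kernel $K := \ker(F\to\mathrm{Sym}(X))$ has finite index in $F$, and by construction commutes with $H^X$, so $H^X\times K$ is a finite-index subgroup of $G$. Since the hypothesis forces $H$ to be infinite when $X$ is finite, and $|X|\geq 2$, the subgroup $H^X$ is already an infinite direct product of $|X|$ copies of an infinite group; if $F$ is infinite so is $K$, and $H^X\times K$ is a direct product of two infinite finitely generated groups; and if $F$ is finite then $H^X$ itself is a finite-index subgroup of $G$ with that structure. Either way, $G$ is quasi-isometric to a direct product of two infinite finitely generated groups and hence has linear divergence.

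\textbf{When $X$ is infinite}, quasi-transitivity forces $F$ to be infinite and at least one $F$-orbit to be infinite; assume $y_0$ has infinite orbit. I would adapt the argument of Theorem~\ref{proposition:wreath} by replacing the $F$-metric on lamp positions with the Schreier pseudo-distance
\[
|x|_X := \min\bigl\{\|f\|_F : f\in F,\ f\cdot y_i = x\text{ for some }i\bigr\}.
\]
The analog of Lemma~\ref{lemma:lamps} is immediate: if $g$ has a lamp at $x\in X$ switched on, then $\|g\|_G\geq |x|_X$, because turning on that lamp forces the cursor to pass through some representative of the coset $\{f\in F : f\cdot y_i = x\}$. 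Together with the $1$-Lipschitz projection $G\twoheadrightarrow F$ (giving $\|g\|_G\geq \|f\|_F$ for the cursor $f$), this supplies the tools for keeping a path outside $B_G(1_G,n/6)$. I would then pick $x^*\in F\cdot y_0$ with $|x^*|_X = n-1$ (possible because that orbit is unbounded in the Schreier metric), let $f^*$ be a minimum-length element of $F$ with $f^*\cdot y_0 = x^*$, and take $g^* := (\delta_{h_0}^{x^*},f^*)$, which has $\|g^*\|_G = n$.

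The four-case analysis then mirrors that of Theorem~\ref{proposition:wreath}, with $|\cdot|_X$ in place of $\|\cdot\|_F$ on lamps: (1) if $g$ already has lamp $x^*$ on, a cosmetic fix suffices; (2) if $g$ has some lamp at $x$ with $|x|_X\geq n/6$, one walks freely in $F$ while that lamp stays lit; (3) if all lamps of $g$ are close in $X$ but the cursor satisfies $|f\cdot y_i|_X\geq n/6$ for some $i$, first turn on a new far lamp at $f\cdot y_i$ and reduce to~(2); (4) if both the lamps of $g$ and all projections $f\cdot y_i$ are close in $X$, the argument of Case~4 of Theorem~\ref{proposition:wreath} adapts with $|\cdot|_X$ in place of $\|\cdot\|_F$. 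The main obstacle is the extra subcase in which the cursor is long in $F$ but lies in a close coset of every stabilizer, a situation which cannot occur in the ordinary wreath product because there the two metrics coincide. I would handle it by invoking the infinite orbit of $y_0$: choose a short element $g_0\in F$ with $|f g_0\cdot y_0|_X\geq n/6$, extend the cursor from $f$ to $f g_0$ along a segment keeping $\|\cdot\|_F\geq n/6$ (available whenever $\|f\|_F\geq n/6$ already), turn on a lamp at $(f g_0)\cdot y_0$, and then reduce to~(2).
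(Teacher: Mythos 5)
Your overall skeleton matches the paper's: dispose of finite $X$ by a direct-product reduction, and for infinite $X$ rerun the four-case argument of Theorem \ref{proposition:wreath} with the Schreier metric on lamp positions and the analogue of Lemma \ref{lemma:lamps}. Your finite-$X$ case is correct and in fact cleaner than the paper's treatment (which gives an explicit path construction when $X$ is finite and $H,F$ are both infinite): passing to the finite-index subgroup $H^X\times K$ with $K=\ker(F\to \mathrm{Sym}(X))$ settles all finite-$X$ subcases at once. You also correctly isolate the one genuinely new phenomenon, namely that $\norm{f}_F$ can be large while every $f\cdot y_i$ stays close to the basepoints.

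However, your Case 4 has a real gap. First, the assertion that Case 4 of Theorem \ref{proposition:wreath} "adapts with $\norm{\cdot}_X$ in place of $\norm{\cdot}_F$" is not justified: the key inequality (\ref{eq:ineq}) of the group case uses that the lamps are points of $F$ and that moving the cursor shortens the optimal walk through them by at most twice the largest pairwise $F$-distance between lamps; in the permutational setting the lamps are cosets of stabilizers, and their being close in the Schreier metric does not control the geometry of the walk in $F$ that lights them. Second, your fix for the long-cursor subcase is flawed as stated: (i) the existence of a short $g_0$ with $\norm{fg_0\cdot y_0}_X\geq \frac{n}{6}$ needs an argument --- it does hold, because $f$ acts injectively on the infinite, connected, locally finite Schreier orbit of $y_0$ and hence cannot map the ball of radius $\lceil \frac{n}{6}\rceil+1$ into the strictly smaller ball of radius $\frac{n}{6}$, so some $z=g_0\cdot y_0$ with $\norm{g_0}_F\leq \lceil \frac{n}{6}\rceil+1$ has $\norm{f\cdot z}_X\geq \frac{n}{6}$ --- but "invoking the infinite orbit" alone does not give it; (ii) the claim that the cursor segment from $f$ to $fg_0$ keeps $\norm{\cdot}_F\geq \frac{n}{6}$ "whenever $\norm{f}_F\geq \frac{n}{6}$ already" is false, since $g_0$ may have length comparable to $\frac{n}{6}$, and the subcase where $\norm{f}_F<\frac{n}{6}$ yet $\norm{g}_G=n$ for other reasons (heavy lamps, a long forced walk between close cosets) is left uncovered entirely. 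The device that repairs all of this, and the one the paper uses, is the plain triangle inequality in $G$: before any far lamp is lit one applies at most roughly $\frac{n}{3}$ generators to an element of $G$-length $n$, so every intermediate element has $G$-length at least $\frac{2n}{3}$; after that a lamp of $X$-length at least $\frac{n}{6}$ is on and Lemma \ref{lemma:lamps2} takes over.
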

We study first the case in which the action is transitive, and then generalise to quasi-transitive actions. Suppose that $F$ is acting transitively on $X$ and fix a base point $y_0$ of $X$.  
For the wreath product $G$ to be an infinite group, we need at least one of $F$, $H$ or $X$ to be infinite (under the transitivity assumption, if $X$ is infinite then $F$ is also infinite). In the case in which $X$ is finite, $H$ is finite and $F$ is infinite, $G$ has a finite index subgroup isomorphic to $F$, so it has the same divergence as $F$.

Just like in the previous section, we write $f$ for $(1,f)$ and $h$ for $(\delta_h^{y_0},1_F)$. Then the product $fh$ corresponds to $(\delta_{h}^{f\cdot y_0},f)$, that is we act on $y_0$ by $f$ and then apply $h$ to $f\cdot y_0$, while in the second coordinate we have $f$. 
Let $g$ be the element $(k,f)$ where $k:X\to H$ has support $\{x_1,\dots,x_m\}\subseteq X$ and $k(x_i)=h_i$. If $q_i\in F$ is such that $q_i\cdot y_0=x_i$, then we can write $g$ as $q_1 h_1 q_1^{-1}\dots q_m h_m q_m ^{-1} f$. We think of $k$ as a path in $X$ that starts at $y_0$, passes through each point in $\{x_1,\dots,x_m\}$ in some order and at each of them applies the relative $h_i$, and then goes back to $y_0$. Any element of $G$ can be represented as a walk in $X$ through the lamps that we want to turn on, with some $H$-steps at those lamps, and an element $f\in F$.

We fix $S$ and $T$, finite symmetric generating sets for $H$ and $F$ respectively. Define the metrics $\norm{.}_{H}$, $\norm{.}_{F}$ and $\norm{.}_{G}$ as $\norm{.}_{S}$, $\norm{.}_{T}$ and $\norm{.}_{T\cup S}$ respectively. We can see $X$ as a metric space with the metric $\norm{.}_X$ defined by $d_X(.,y_0)$, where $d_X(x,y)=\min\{\norm{f}_F\mid f\cdot x=y, f\in F\}$. This metric space is isometric to the Schreier graph of the $F$-action on $X$ with respect to the generating set $T$, $Sch(F,X,T)$, equipped with the graph metric. 

The argument is analogous to that of the previous section, except Case 4, since here it is possible that $\norm{f}_F$ is large but $\norm{f\cdot y_0}_X$ is small (for instance $f$ could be in the stabilizer of $y_0$). Like in the other case, we have the following fact.
\begin{lemma}\label{lemma:lamps2}If an element $g\in G$ has the lamp $x\in X$ on, then $\norm{g}_G\geq \norm{x}_X$.
\end{lemma}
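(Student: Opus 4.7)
The plan is to replicate the argument for Lemma \ref{lemma:lamps}, substituting the Schreier metric $\norm{\cdot}_X$ for the word metric on $F$. Given $g = (k, f) \in G$ with the lamp at $x \in X$ on (i.e., $k(x) \neq 1_H$), I would take an optimal expression $g = w_1 \cdots w_\ell$ as a word of length $\ell = \norm{g}_G$ in the generators $T \cup \{(\delta_s^{y_0}, 1_F) \mid s \in S\}$. Writing each prefix product as $w_1 \cdots w_{i-1} = (k_i, f_i)$, note that $f_i$ is just the product of those letters $w_j$ (for $j < i$) that come from $T$, so in particular $\norm{f_i}_F \leq i - 1 \leq \ell$.

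The key observation to record is what an $S$-letter does to the cursor. When $w_i = (\delta_s^{y_0}, 1_F)$ is applied, the new partial product is $(k_i \cdot {}^{f_i}\delta_s^{y_0}, f_i)$, with ${}^{f_i}\delta_s^{y_0}$ supported precisely at $f_i \cdot y_0 \in X$. Thus each $S$-letter $w_i$ modifies the lamp at the current cursor position $f_i \cdot y_0$, while the $T$-letters only move the cursor and leave the lamps untouched. Consequently, if the lamp at $x$ is nontrivial in $g$, there must exist at least one index $i$ for which $w_i$ is an $S$-letter and $f_i \cdot y_0 = x$; otherwise no contribution at the position $x$ is ever made and $k(x) = 1_H$.

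For such an $i$, the definition $\norm{x}_X = d_X(y_0, x) = \min\{\norm{f}_F \mid f \cdot y_0 = x\}$ gives $\norm{x}_X \leq \norm{f_i}_F \leq \ell = \norm{g}_G$, which is the desired inequality. There is no real obstacle here beyond the bookkeeping: the whole point of defining $\norm{\cdot}_X$ via minimization over $F$-witnesses is to make this lifting step automatic, and the intuition is the same as for ordinary wreath products — the cursor must visit $x$ in order to alter the lamp there, and visiting $x$ from $y_0$ requires an $F$-motion of length at least $\norm{x}_X$.
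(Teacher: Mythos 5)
Your argument is correct and is exactly the formalization of the paper's (essentially unstated) justification — the paper leaves this lemma with only the remark that ``we need to reach the lamp in order to switch it on,'' and your prefix-tracking computation showing that an $S$-letter alters precisely the lamp at the current cursor position $f_i\cdot y_0$ is the right way to make that precise. No gaps; the bound $\norm{x}_X\leq\norm{f_i}_F\leq\norm{g}_G$ follows just as you say.
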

\begin{proof}[Proof of Theorem \ref{proposition:pwp} (when the action is transitive)]
We assume first that $X$ is infinite. We use the same strategy as in the previous case and connect each element $g\in G$ with $\norm{g}_G=~n$ to a fixed element $g^{*}$ with $\norm{g^{*}}_G=~n$ via a path that avoids $B_G(1_G,\frac{n}{6})$ whose length is linear in $n$. Then, by concatenation, we will be able to connect any two elements of $G$ with $G$-length $n$ linearly outside $B_G(1_G,\frac{n}{6})$.

Let $x^{*}\in X$ be a point with $\norm{x^{*}}_X=n-1$, let $f_1,\dots ,f_{n-1}$ be generators of $F$ such that $f_1\dots f_{n-1}\cdot y_0=x^{*}$, and define $f^{*}=f_1\dots f_{n-1}$. Fix a generator $h_0$ of $H$. Define $g^{*}$ to be $f^{*}h_0=(\delta_{h_0}^{x^{*}},f^{*})$. Then $\norm{g^{*}}_G=n$ because the lamp at $x^{*}$ is on, and the smallest number of generators of $F$ we need to use in order to get to $x^{*}$ from $y_0$ is $n-1$. Fix also a point $x^{**}\in X$ with $\norm{x^{**}}_X=\lceil \frac{n}{6}\rceil$. 

Let $g=(k,f)\in G$ have $\norm{g}_G=n$. We write $g$ as a sequence of minimal length of generators as $e_1 h_1 \dots e_m h_m e_{m+1}$, where $e_i\in F$ and $h_i\in~H$ are sequences of generators in $T$ and $S$ respectively, and only $e_1$ and $e_{m+1}$ can possibly be $1$. We have that $g=(\delta_{h_1}^{e_1\cdot y_0} \delta_{h_2}^{e_1 e_2\cdot y_0} \dots \delta_{h_m}^{e_1 e_2 \dots e_m \cdot y_0} ,e_1 e_2 \dots e_{m+1})$, $f=~e_1 e_2 \dots e_{m+1}$, and $n=~\norm{g}_G=\sum\limits_{i=1}^{m+1} \norm{e_i}_F+\sum\limits_{i=1}^{m}\norm{h_i}_H$. Let $x_j=e_1 \dots e_j \cdot y_0$ for $1\leq j\leq m$ be the lamps that are on.
Let $t_0, \dots, t_{r-1}$ be generators of $F$ with $r$ minimal such that $t_0\dots t_{r-1}\cdot y_0=f\cdot y_0$. Let $f=s_1\dots s_l$ in terms of generators that realise its $F$-length, so $r\leq l$.

\begin{enumerate}
\item[Case 1]\label{trivial} If $x^{*}$ is one of the lamps that are already on in $g^{*}$, for metric reasons it is the only one and $g=f^{*}h$ for some generator $h$ of $H$. We simply change its brightness to $h_0$ if needed. The path is $g=f^{*}h, f^{*}, f^{*}h_0$.
\item[Case 2]\label{far} If one of the lamps that are on in $g$, say $x_j=e_1\dots e_j\cdot y_0$, is such that $\norm{x_j}_X\geq \frac{n}{6}$, we can start by reaching $x^{*}$ and turning it on. That is, we apply $$e_{m+1}^{-1}, \dots,e_1^{-1}, f_1, \dots, f_{n-1}, h_0,$$ reaching the element $(\delta_{h_1}^{x_1}\dots \delta_{h_m}^{x_m} \delta_{h_0}^{x^{*}},f_1\dots f_{n-1})$.

Then we turn off the lamps of $g$ and go back to $x^{*}$ via $$f_{n-1}^{-1},\dots ,f_1^{-1},e_1, \dots ,e_m, h_m^{-1}, e_m^{-1},\dots ,h_1^{-1},e_1^{-1}, f_1,\dots , f_{n-1},$$ reaching $g^{*}$.

This takes at most $6n$ steps, and at any point there is a lamp on with $X$-length at least $\frac{n}{6}$, so that we avoid $B_G(1_G,\frac{n}{6})$ by Lemma \ref{lemma:lamps2}.  
\item[Case 3] If all the lamps that are on are at $X$-distance less than $\frac{n}{6}$ from $y_0$ and $\norm{f\cdot y_0}_X\geq\frac{n}{6}$, then start by applying $h_0$ and get $(k\delta^{f\cdot y_0}_{h_0},f)$. If $f\cdot y_0=x^{*}$, then there is at most another lamp on for metric reasons, and $m=1$. We proceed by turning it off and reach $f^{*}h_0$ via $e_{2}^{-1}, h_1^{-1}, e_2$. 
If $f\cdot y_0\neq x^{*}$, we reduce to Case 2.
\item[Case 4] If all the lamps that are on are at $X$-distance less than $\frac{n}{6}$ from $y_0$ and $\norm{f\cdot y_0}_X< \frac{n}{6}$, we need to make some $F$-moves first. Let $a_1\dots a_p$ be an element of $F$ written in terms of a minimal number of generators such that $a_1\dots a_p\cdot y_0=x^{**}$. So $p=\lceil \frac{n}{6}\rceil$. Multiply $g$ by $t_{r-1}^{-1},\dots, t_0^{-1},a_1,\dots, a_p, h_0$ in this order. Since $r<\frac{n}{6}$ and $p=\lceil \frac{n}{6}\rceil$, at each step, the $G$-metric of the element will be at least $\frac{n}{6}$ by triangle inequality. We get the element $(k \delta^{x^{**}}_{h_0}, e_1\dots e_{m+1} t_{r-1}^{-1}\dots t_{0}^{-1}a_1\dots a_p)$, which has $x^{**}$ on. By keeping it on, we can turn off the other lamps, turn on $x^{*}$ and reach $g^{*}$. To do this, multiply by $$a_{p}^{-1},\dots, a_{1}^{-1}, t_0,\dots, t_{r-1}, e_{m+1}^{-1},h_m^{-1},\dots ,e_{1}^{-1},$$ reaching the element $(\delta^{x^{**}}_{h_0},1_F)$. Then multiply by $f_1,\dots ,f_{n-1},h_0$, to turn on the lamp at $x^{*}$, reaching $(\delta^{x^{**}}_{h_0}\delta^{x^{*}}_{h_0},f_1 \dots f_{n-1})$. Finally, switch the lamp at $x^{**}$ off, via $$f_{n-1}^{-1},\dots, f_1^{-1}, a_1, \dots ,a_p, h_0^{-1},a_{p}^{-1},\dots, a_{1}^{-1},f_1,\dots ,f_{n-1}.$$ The element that we reach is $g^{*}$. 

The number of steps this takes is at most $6n$ and at each step we are outside $B_G(1_G,\frac{n}{6})$, by triangle inequality first and by having $x^{**}$ or $x^{*}$ on later.

\end{enumerate}
In any case, we can reach the element $g^{*}$ in at most $6n$ steps avoiding $B_G(1_G,\frac{n}{6})$.

If $X$ and $F$ are finite, and $H$ is infinite, then $G$ is virtually a direct product of infinite groups, so it has linear divergence.

In the case in which $X$ is finite, and $H$ and $F$ are infinite, we also have linear divergence. Let $X$ be $\{x_0,\dots, x_l\}$, with $y_0=x_0$, let $c_i$ be a shortest element in $F$ such that $c_i\cdot y_0=x_i$, and let $M=max \norm{c_i}_F$. Let $g=(k,f)\in G$ have $\norm{g}_G=n$ and define $h_i=k(x_i)$. Since $|X|\geq 2$, we have that $\norm{h_j}_H\leq \frac{n}{2}$ for some $j$. Let $h\in H$ be a fixed element of $H$ such that $\norm{h}_H=n$. We can connect $g$ in linear time to $(\delta_h^{y_0},1_F)$ avoiding $B_G(1_G,\frac{n}{2}-2M)$.

There is $f'\in F$ with $\norm{f'}_F\leq M$ such that $f'\cdot (f\cdot y_0)=y_0$. Multiply $g$ by $f' c_j$, and then by $h_j^{-1}h$ to reach $(k \delta_{h_j^{-1}}^{x_j}\delta_{h}^{x_j}, ff'c_j)$. By triangle inequality, at any step, the metric is at least $n-2M-\frac{n}{2}=\frac{n}{2}-2M$, since $\norm{f'}_F\leq M$, $\norm{c_j}_F\leq M$, $\norm{h_j}_H\leq \frac{n}{2}$ and the metric increases as we multiply by $h$.
Now, we turn off all the lamps except $x_j$ to reach $(\delta_h^{x_j},1_F)$. If $x_j=y_0$ we are done, otherwise we turn on $y_0$ by the amount $h$ while keeping $x_j$ on, and then we turn off $x_j$.
\end{proof}

We consider now permutational wreath products where the action of $F$ on $X$ has a finite number of orbits. If $F$ is finite, then $X$ is also finite since we have finitely many orbits. For $G$ to be infinite, $H$ has to be infinite. Hence $G$ has the same divergence as the direct sum of copies of $H$, which is linear.

If $F$ is infinite, and $X$ is finite, the divergence is the same as that of $F$ if $H$ is finite, and it is linear if $H$ is infinite, by the same argument as in the transitive case.

Now suppose that $F$ is infinite and $X$ is infinite. 
We choose a base point for each orbit and see each orbit as a metric space where the distance between two points $x$ and $y$ is $\min\{\norm{f}_F\mid f\cdot x=y, f\in F\}$, i.e. the length of a shortest path between $x$ and $y$ in the Schreier graph of the $F$-action on that orbit with respect to $T$. We can use the same arguments as before, applying them to one of the orbits, say $X_0$ with basepoint $y_0$. Fix elements $x^{*}$ and $x^{**}$ in $X_0$ of $X_0$-length $n-1$ and $\lceil \frac{n}{6}\rceil$ respectively, and $h_0$ a generator of $H$, and connect any $g$ of $G$-length $n$ to the element that has $h_0$ at $x^{*}$.
Once we have turned on lamp $x^{*}$, we turn off those of $g$ in order to get to an element that only has the lamp $x^{*}$ on.
\section{Divergence function of wreath products of graphs}\label{section:wreath products of graphs}
Given two graphs $A$ and $B$, we can construct the \textbf{wreath product} $\Gamma=~B\wr A$, as defined in \cite{Erschler} (note that what Erschler defines as $A\wr B$ is here $B\wr A$, for the definition to be consistent with the notation in the case of Cayley graphs). If $A$ and $B$ are Cayley graphs of groups, then $B\wr A$ is the Cayley graph of the wreath products of the groups with respect to the generating set used in Section \ref{section:wreath products}.

Let $b_0$ be a fixed vertex in $B$. We will consider functions $f:A\to B$, where $A$ and $B$ are just the vertex sets of $A$ and $B$, and the support of $f$ is $\supp(f)=\{a\in A\mid f(a)\neq b_0\}$. The vertices of the graph $B\wr A$ are $(f,a)$, where $a\in A$ and $f: A\to B$ has finite support. The edges can be of two types. Those of type (1) connect vertices $(f_1,a_1)$ and $(f_2,a_2)$ where $a_1=a_2$, $f_1(x)=f_2(x)$ for all $x\neq a_1$ and $f_1(a_1)$ is connected to $f_2(a_1)$ by an edge in $B$; those of type (2) connect vertices $(f_1,a_1)$ and $(f_2,a_2)$, where $f_1(x)=f_2(x)$ for all $x\in A$ and $a_1$ is connected to $a_2$ by an edge in $A$.

We assume that $A$ and $B$ are connected, and that $|A|\geq 2$ and $|B|\geq 2$. In the case where $A$ and $B$ are vertex-transitive, the wreath product $B\wr A$ is also vertex-transitive, and the conditions of Proposition \ref{welldef} are satisfied. 
\begin{thm} If $A$ and $B$ are connected, vertex-transitive, locally finite graphs, then their wreath product $B\wr A$ has linear divergence.
\end{thm}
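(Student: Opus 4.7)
The plan is to mirror the four-case analysis in the proof of Theorem \ref{proposition:wreath}, with multiplication by a generator replaced by the traversal of an edge in $\Gamma := B \wr A$. Since $A$ and $B$ are vertex-transitive, $\Gamma$ is vertex-transitive, and we fix basepoints $a_0 \in A$, $b_0 \in B$ and the root vertex $v_0 = (\mathbf{1}, a_0) \in \Gamma$, where $\mathbf{1} : A \to B$ is constant at $b_0$. For $v = (f,a) \in \Gamma$ with $\supp{f} = \{x_1, \dots, x_m\}$, a geodesic from $v_0$ to $v$ alternates a walk $e_1, \dots, e_{m+1}$ in $A$ visiting $x_1, \dots, x_m$ and ending at $a$ with $B$-geodesics from $b_0$ to $f(x_i)$ performed at each lamp, so
\begin{equation*}
\norm{v}_\Gamma = \sum_{i=1}^{m+1} \norm{e_i}_A + \sum_{i=1}^{m} d_B(b_0, f(x_i)).
\end{equation*}

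The first step is to establish the direct analog of Lemma \ref{lemma:lamps}: if $f(x) \neq b_0$ for some $x \in A$, then $\norm{v}_\Gamma \geq d_A(x, a_0)$, since any path from $v_0$ to $v$ must steer the cursor through $x$ in order to perform the $B$-move there. Assuming $A$ is infinite (the cases where one of $A$, $B$ is finite are handled separately by reducing to subgraphs with product structure, mirroring the corresponding degenerate cases of Theorem \ref{proposition:wreath}), I then pick a vertex $a^* \in A$ with $d_A(a^*, a_0) = n-1$ and a neighbor $b^* \in B$ of $b_0$, and set $v^* = (f^*, a^*)$ with $f^*(a^*) = b^*$ and $f^* \equiv b_0$ elsewhere, so that $\norm{v^*}_\Gamma = n$. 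It then suffices to connect each vertex $v$ of $\Gamma$-length $n$ to $v^*$ by a path of length at most $6n$ that avoids $B_\Gamma(v_0, n/6)$, and then concatenate two such paths.

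The case split is identical to that of the group proof. \textbf{Case 1:} $a^* \in \supp{f}$ forces $a^*$ to be the unique lamp of $v$, so we only adjust its $B$-value. \textbf{Case 2:} some lamp $x_j$ of $v$ with $d_A(x_j, a_0) \geq n/6$ is on, so executing the standard cleanup-relight-restore walk keeps $x_j$ lit throughout, and the analog of Lemma \ref{lemma:lamps} keeps us outside $B_\Gamma(v_0, n/6)$. \textbf{Case 3:} all lamps of $v$ are $A$-close to $a_0$ but $d_A(a, a_0) \geq n/6$, so a single $B$-step at $a$ turns on a lamp there, reducing to Case 2 (or to Case 1 if $a = a^*$). \textbf{Case 4:} all lamps are $A$-close and $d_A(a, a_0) < n/6$, so the $A$-walk length through the lamps plus the accumulated $B$-distances exceeds $2n/3$, and moving the cursor from $v$ cannot shrink $\norm{\cdot}_\Gamma$ below $n/6$, by the graph version of inequality (\ref{eq:ineq}).

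The main obstacle is Case 4, where one must confirm that as the cursor walks a distance of roughly $n/6$ towards $a^*$, the $\Gamma$-metric stays above $n/6$. The key estimate is that after any partial walk in $A$ from $v$, all original lamps remain on, and the minimal $A$-walk visiting them only shortens by at most twice the $A$-diameter of $\{a_0, x_1, \dots, x_m\}$; since each $x_j$ lies within $A$-distance $n/6$ of $a_0$, this diameter is at most $n/3$, leaving at least $2n/3 - n/3 = n/3 > n/6$ in the $\Gamma$-metric. Once the cursor reaches $a^*$ and we light it with $b^*$, the lamp-lemma analog maintains the lower bound through the cleanup phase. The remaining verifications are direct translations of the group-theoretic ones, using vertex-transitivity of $A$ and $B$ in place of any reliance on a group operation.
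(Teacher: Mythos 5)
Your proposal follows the paper's own proof essentially verbatim: the same fixed target $v^{*}=(f^{*},a^{*})$ with $\norm{a^{*}}_A=n-1$ and $f^{*}(a^{*})=b^{*}$, the same graph analogue of Lemma \ref{lemma:lamps}, the same four-case split, and the same Case 4 estimate (the paper likewise bounds the loss in the lamp-walk by the largest pairwise distance between lamps, which is at most $n/3$ since each lamp lies within $n/6$ of $a_0$, leaving $2n/3-n/3>n/6$). The argument is correct and there are no substantive differences from the paper's treatment.
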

\begin{proof}If $A$ is finite and $B$ is infinite, the wreath product is quasi-isometric to a direct product of infinite graphs, which can be easily checked to have linear divergence. We assume now that $A$ is infinite.
    
    Because $B\wr A$ is vertex-transitive, we may fix the point $x_0$ in the definition of divergence, and use Definition (\ref{defsimpl}). Let $a_0$ be a fixed point in $A$, and let $f_0:A\to B$ be the constant map $a\mapsto b_0$. Let $g_0:=(f_0,a_0)$ be our fixed point $x_0$. We define the metrics in the graphs $A$, $B$, and $\Gamma$ by $\norm{a}_A:=\dist_A(a,a_0)$,  $\norm{b}_B:=\dist_B(b,b_0)$, and $\norm{g}_\Gamma:=\dist_\Gamma(g,g_0)$ for $a\in A$, $b\in B$, and $g\in\Gamma$. If an element $g=(f,a)$ is such that $f(x)\neq b_0$ for some $x\in A$, then $\norm{g}_\Gamma\geq \norm{x}_A$.

    Using the same strategy as in the group case, we will connect every element of $\Gamma$ with metric $n$ to $g^{*}$, a fixed element of $\Gamma$ with metric $n$, using a path of length linear in $n$ and avoiding $B_\Gamma(g_0, \frac{n}{6})$. We fix vertices $b^{*}\in B$ and $a^{*}\in A$ with $\norm{b^{*}}_B=1$ and $\norm{a^{*}}_A=n-1$. Let $g^{*}=(f^{*},a^{*})$, where $f^{*}(a^{*})=b^{*}$ and $f^{*}(a)=b_0$ for all $a\neq a^{*}$. It is easy to see that $\norm{g^{*}}_\Gamma=n$.

    Now consider a vertex $g=(f,a)\in \Gamma$ with $\norm{g}_\Gamma=n$. If $f(a^{*})\neq b_0$, then we can connect $g$ to $g^{*}$ using at most two edges of type (1).

    If $f(a^{*})= b_0$ and there is $x\in A$ with $\norm{x}_A\geq\frac{n}{6}$ such that $f(x)\neq b_0$, we move to $(f,a^{*})$ using edges of type (2), then we use an edge of type (1) to get to $(f', a^{*})$, where $f'(a^{*})=b^{*}$ and $f'(a)=f(a)$ for all $a\neq a^{*}$. Then, using edges of type (2), we move in $A$ through the support of $f$, and at each vertex $y$ of the support we move along edges of type (1) from $f(y)$ to $b_0$. Once we are done, we use edges of type (2) to reach $a^{*}$. The element we are left with is $g^{*}$. For each vertex $(\phi,\alpha)$ we pass through,  $\phi(x)\neq b_0$, or $\phi(a^{*})\neq b_0$, so we are always outside $B_\Gamma(g_0,\frac{n}{6})$.

    If $\norm{x}_A<\frac{n}{6}$ for all $x\in \supp(f)$, and $\norm{a}_A\geq \frac{n}{6}$, we start by moving along an edge of type (1) and reach $(f', a)$, where $f'(a)=b^{*}$ and $f'(x)=f(x)$ for all $x\neq a$. Then we can reduce to the previous cases. 

    If $\norm{x}_A<\frac{n}{6}$ for all $x\in \supp(f)$, and $\norm{a}_A< \frac{n}{6}$, we have that, if we make moves of type (2), at each step the metric is at least $\frac{n}{6}$ for the same reason as in the group case. So we proceed by reaching $(f,a^{*})$ and then reducing to the previous case.

In each case, we were able to produce a path in $\Gamma$ from $g$ to $g^{*}$, with a number of edges that is linear in $n$, and that avoids the ball $B_\Gamma(g_0,\frac{n}{6})$.
\end{proof}
\subsection{Divergence function of wreath products of graphs with respect to a family of subsets}Wreath products of graphs with respect to a family of subsets were introduced in \cite{Erschler} and they are a way to interpolate between direct products and wreath products of two graphs, for which it is known that divergence is linear. Given graphs $A$ and $B$ and a family of subsets $\{A_i\mid i\in I\}$ of $A$, we define the wreath product $B\wr_I A$ with respect to this family as the graph whose vertices are pairs $(f,a)$, where $a\in A$ and $f:I\to B$ is a function with finite support.
The support of a function $f:I\to B$ is $\{i\in I\mid f(a)\neq b_0\}$, where $b_0$ is a fixed vertex in $B$. 
The edges can be of two types. Those of type (1) connect vertices $(f_1,a_1)$ and $(f_2,a_2)$ where $a_1=a_2=a$, there exists $i_a$ with $a_1\in A_{i_a}$ such that $f_1(i)=f_2(i)$ for all $i\neq i_a$, and $f_1(i_a)$ is connected to $f_2(i_a)$ by an edge in $B$; those of type (2) connect vertices $(f_1,a_1)$ and $(f_2,a_2)$, where $f_1(x)=f_2(x)$ for all $x\in I$ and $a_1$ is connected to $a_2$ by an edge in $A$.

Assume that $A$ and $B$ are connected, and that $|B|\geq 2$ and $|I|\geq 2$ (if $|I|$=1, then we get $B\times A$). Assume that the chosen family of subsets partitions $A$ and that the family is transitive, that is for all $a_1$, $a_2\in A$ there is an isometry $\phi_{a_1,a_2}$ that preserves the partition and sends $a_1$ to $a_2$ (this condition implies that $A$ is vertex-transitive). Assume moreover that $B$ is vertex-transitive. Then $\Gamma=B\wr_I A$ is vertex-transitive (see Lemma 2.3(iv) in \cite{Erschler}).
Therefore, the conditions of Proposition \ref{welldef} are satisfied and we can apply the same procedure as in previous cases. 

Let $a_0$ be a fixed vertex in $A$, and define $g_0=(f_0,a_0)$, where $f_0(i)=b_0$ for all $i\in I$. Define $\norm{.}_A$, $\norm{.}_B$ and $\norm{.}_\Gamma$ as above. If both $I$ and $B$ are finite, $B\wr_I A$ is quasi-isometric to $A$, so it has the same divergence as $A$. If $I$ and $A$ are finite, and $B$ is infinite, then $B\wr_I A$ is quasi-isometric to a direct product of $|I|$ copies of $B$, so it has linear divergence. If $I$ is finite, and $A$ and $B$ are infinite, we can mimic the corresponding argument in the group case, that is, taken $g\in \Gamma$, we change $f(j)$ such that $\norm{f(j)}_B\leq\frac{n}{2}$ to a fixed amount of $B$-length $n$ staying outside the ball $B(g_0,\frac{n}{2}-2M)$ for some constant $M$ in a linear number of steps.

The most involved case is the one with $I$ (and therefore $A$) infinite. 
We argue in a similar way as in the group case. We define a graph $\mathcal{I}$ as the quotient graph of $A$ by the equivalence relation given by $I$ (and we forget about the edges that connect two vertices in the same $A_i$). That is, the vertices of $\mathcal{I}$ are the elements of $I$, and two vertices $i$ and $j$, with $i\neq j$, are connected by an edge in $\mathcal{I}$ if and only if an element of $A_i$ is connected to an element of $A_j$ by an edge in $A$, and there are no edges from a vertex to itself. 

\begin{proposition}
    Let $A$, $B$ be connected, locally finite graphs, and $\{A_i\mid ~i\in ~I\}$ a transitive family of subsets that partitions $A$. If $B$ is vertex-transitive, $I$ is infinite, and $\mathcal{I}$ is locally finite, then $B\wr_I A$ has linear divergence.
\end{proposition}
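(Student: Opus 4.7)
The plan is to run the same four-case scheme used in Theorem~\ref{proposition:wreath} and the preceding graph wreath product theorem, with the $\mathcal{I}$-distance $\norm{i}_\mathcal{I}:=d_\mathcal{I}(i_0,i)$ playing the role of lamp depth for $B\wr_I A$. First I would fix a basepoint $a_0\in A$, let $i_0\in I$ be the unique index with $a_0\in A_{i_0}$, set $g_0:=(f_0,a_0)$ with $f_0\equiv b_0$, and keep the metrics $\norm{\cdot}_A$, $\norm{\cdot}_B$, $\norm{\cdot}_\Gamma$ from earlier sections. The lamp lemma analogue $\norm{g}_\Gamma\geq\norm{i}_\mathcal{I}$ for every $g=(f,a)\in\Gamma$ and every $i\in\supp{f}$ is immediate from projecting the type (2) edges of any minimal walk realising $g$ onto an $\mathcal{I}$-walk that starts at $i_0$ and must visit $i$. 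A key preparatory observation, using transitivity of the family together with the disjointness of cells, is that the partition-preserving stabiliser of any cell $A_j$ acts transitively on $A_j$: a partition-preserving isometry moving a vertex of $A_j$ into $A_j$ must fix $A_j$ setwise, since distinct cells are disjoint. Consequently, every vertex of $A_j$ has an $A$-neighbour in some $\mathcal{I}$-adjacent cell of $A_j$. Since $I$ is infinite and $\mathcal{I}$ is locally finite and connected (inheriting connectedness from $A$), $\mathcal{I}$ has infinite diameter.

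I would then fix $b^*\in B$ with $\norm{b^*}_B=1$, an index $i^*\in I$ with $\norm{i^*}_\mathcal{I}$ of order $n$, and a vertex $a^*\in A_{i^*}$ realising $d_A(a_0,A_{i^*})$; setting $g^*:=(\delta_{b^*}^{i^*},a^*)$ gives a reference element with $\norm{g^*}_\Gamma$ of order $n$. By concatenating paths through $g^*$ it suffices to show that any $g=(f,a)\in\Gamma$ with $\norm{g}_\Gamma=n$ can be joined to $g^*$ by a path of length $O(n)$ avoiding $B_\Gamma(g_0,n/6)$.

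Cases~1--3 transpose cleanly. Case~1 ($i^*\in\supp{f}$) is handled with $O(1)$ moves in $B$. Case~2 (some $i\in\supp{f}$ has $\norm{i}_\mathcal{I}\geq n/6$) follows the group-case choreography: retrace $g$'s $A$-walk back to $a_0$, walk to $a^*$, toggle the lamp at $i^*$, turn off the lamps of $g$ one at a time and return to $a^*$; the lamp lemma keeps $\norm{\cdot}_\Gamma\geq n/6$ at every step, since either the original far lamp or, later, the new lamp at $i^*$ is always on. Case~3 (all lamps $\mathcal{I}$-close but $\norm{i_a}_\mathcal{I}\geq n/6$) reduces to Case~2 via a single type (1) toggle at $a$.

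The main obstacle is Case~4, where every $i\in\supp{f}\cup\{i_a\}$ satisfies $\norm{i}_\mathcal{I}<n/6$ and no local toggle produces a far lamp. My plan is to walk in $A$ from $a$ to a vertex $a'$ in some cell of $\mathcal{I}$-depth at least $n/6$, then toggle the lamp at $i_{a'}$ to reduce to Case~2. The preparatory observation permits stepping from any vertex into an $\mathcal{I}$-adjacent cell in one $A$-move, so an outward-moving strategy in $\mathcal{I}$ lifts to an $A$-walk that reaches $\mathcal{I}$-depth at least $n/6$ in $O(n)$ $A$-steps. During this walk, the triangle inequality $\norm{gw_1\cdots w_\ell}_\Gamma\geq n-\ell$ keeps us outside $B_\Gamma(g_0,n/6)$ as long as $\ell\leq 5n/6$; the delicate point is calibrating the outward strategy so that the far cell is reached, and the toggle performed, before the triangle-inequality protection runs out, which requires exploiting the structure of the cells and not merely the infinite diameter of $\mathcal{I}$. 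Once the far lamp is installed, Case~2 closes the path to $g^*$ in $O(n)$ further steps.
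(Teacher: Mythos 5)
Your outline follows the paper's strategy (lamp lemma for $\norm{i}_\mathcal{I}$, a fixed reference element, the same four cases), but there is a genuine gap exactly at the point you flag as ``delicate,'' and it is not a detail: it is the one new ingredient this proposition requires beyond the plain graph wreath product. Your preparatory observation only shows that every vertex of a cell $A_j$ has an $A$-neighbour in \emph{some} $\mathcal{I}$-adjacent cell. An outward-moving strategy in $\mathcal{I}$ must, at each step, enter a \emph{specific} adjacent cell (the next one on a chosen $\mathcal{I}$-geodesic away from $i_0$), and the exit towards that particular cell may be far from your current vertex inside $A_j$; the neighbour your observation provides may well lie in a cell that is closer to $i_0$. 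So the claim that the strategy ``lifts to an $A$-walk that reaches $\mathcal{I}$-depth at least $n/6$ in $O(n)$ $A$-steps'' is unjustified as stated. The paper closes this by a stronger use of the transitive family: applying $\phi_{a_0,x}$ to nearest representatives $x_1,\dots,x_k$ of the cells adjacent to $A_{i_0}$ shows that from \emph{every} vertex $x$, every cell adjacent to $A_{i_x}$ has a representative within $A$-distance $K=\max_j\norm{x_j}$ of $x$; hence any $\mathcal{I}$-path of length $l$ lifts to an $A$-path of length at most $Kl$ visiting the prescribed cells in order. This uniform constant $K$ is what you are missing.

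The omission also breaks your quantitative calibration. With the multiplicative cost $K$, reaching a cell of $\mathcal{I}$-depth $\frac{n}{6}$ from $a$ (whose cell has depth $<\frac{n}{6}$) may cost up to roughly $\frac{Kn}{3}$ type (2) moves, while the triangle-inequality protection $\norm{gw_1\cdots w_\ell}_\Gamma\geq n-\ell$ only lasts for $\ell\leq \frac{5n}{6}$; for $K$ large your Case 4 walk exits the safe regime before the far lamp is lit. The paper resolves this by shrinking the avoided ball to $B_\Gamma(g_0,\frac{n}{6K})$ and choosing the intermediate index $i^{**}$ with $\norm{i^{**}}_\mathcal{I}=\lceil\frac{n}{6K}\rceil$, so the detour through $i_0$ and $i^{**}$ costs at most about $2K\lceil\frac{n}{6K}\rceil\approx\frac{n}{3}$ steps and the triangle inequality keeps the metric at least $\frac{2n}{3}>\frac{n}{6K}$ throughout (the same constant $K$ also enters the choice of $a^*$, with $n-1\leq\norm{a^*}_A\leq K(n-1)$). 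To repair your argument you need to prove the uniform lifting lemma with the constant $K$ and rescale the ball accordingly; the rest of your four-case scheme then goes through as in the paper.
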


\begin{proof}

We proceed by fixing $i_0$ such that $a_0\in A_{i_0}$. Using the graph metric in $\mathcal{I}$, define $\norm{i}_\mathcal{I}:=\dist_\mathcal{I}(i, i_0)$ for any $i\in I$. Suppose that $A_{i_1},\dots ,A_{i_k}$ are the subsets of the family adjacent to $A_{i_0}$, and let $x_j$ be an element of $A_{i_j}$ closest to $a_0$. Let $K=\max\limits_{1\leq j\leq k} \norm{x_j}_\mathcal{I}$. 
Then for any other vertex $x\in A$, there are $k$ sets of the family adjacent to $A_x$ and they have representatives at distance $\norm{x_1}_\mathcal{I},\dots,\norm{x_k}_\mathcal{I}$ from $x$. That is because $\phi_{a_0,x}(x_j)$ are all in different sets (also different from $A_x$), since $\phi_{a_0,x}$ preserves the partition, and they represent all the sets adjacent to $A_x$, and so there cannot be a vertex at a different distance from $x$ than $\norm{x_1}_\mathcal{I},\dots, \norm{x_k}_\mathcal{I}$ that is a closest vertex of a subset to $x$.

Therefore, given a path in $\mathcal{I}$ of length $l$ starting at $i_0$, say $i_0,i_1,\dots ,i_l$, we can produce a path in $A$ of length at least $l$ and at most $Kl$ starting at $a_0$ that contains points $a_1,\dots ,a_l$, where $a_j\in A_{i_{j}}$, in this order.

Fix $i^{*}, i^{**}\in I$ with $\norm{i^{*}}_\mathcal{I}=n-1$ and $\norm{i^{**}}_\mathcal{I}=\lceil \frac{n}{6K}\rceil$. Fix an element $a^{*}\in A$ with $n-1\leq \norm{a^{*}}_A\leq K(n-1)$ such that $A_{i_{a^{*}}}=A_{i^{*}}$. 
Fix also $b^{*}\in B$ with $\norm{ b^{*}}_B=1$. Define $g^{*}=(f^{*},a^{*})$, with $f^{*}(i^{*})=b^{*}$ and $f^{*}(i)=b_0$ for all $i\neq i^{*}$.

We take $g=(f,a)\in \Gamma$ with distance $n$ from $g_0$ and the goal is to produce a path of length linear in $n$ that connects it to $g^{*}$ outside $B_\Gamma(g_0,\frac{n}{6K})$. We use the fact that, if an element $g=(f,a)$ is such that $f(i)\neq b_0$ for some $i\in \mathcal{I}$, then $\norm{g}_\Gamma\geq \norm{i}_\mathcal{I}$.

If $f(i^{*})\neq b_0$, then we can use at most two edges of type (1) to make $f(i^{*})$ into $b^{*}$.

If there is $j\in \supp(f)$ with $\norm{j}_\mathcal{I}\geq \lceil \frac{n}{6K}\rceil$, then using moves of type (2) we move through $A$ and reach $(f,a^{*})$. We make a move of type (1) to change $f(i^{*})$ to $b^{*}$.
Then we move through the support of $f$ and, applying moves of type (1) to elements of the support of $f$, we make $f(y)$ into $b_0$ for all $y\in \supp(f)$. Finally, move back to $a^{*}$ using edges of type (2). We obtain $(f^{*},a^{*})$. 

If $\norm{i}_\mathcal{I}< \lceil \frac{n}{6K}\rceil$ for all $i\in \supp(f)$, but $\norm{i_a}_\mathcal{I}\geq \lceil \frac{n}{6K}\rceil$, we make a move of type (1) and change $f(i_a)$ from $b_0$ to $b^{*}$, and we reduce to the previous case.

Finally, if $\norm{i}_\mathcal{I}< \lceil \frac{n}{6K}\rceil$ for all $i\in \supp(f)$, and $\norm{i_a}_\mathcal{I}< \lceil \frac{n}{6K}\rceil$, we use moves of type (2) to reach $i_0$ and then $i^{**}$, that is we move along a path in $A$ from $a$ to some $a'\in A_0$ that is at distance at most $K\lceil \frac{n}{6K}\rceil$ from $a$, and then from $a'$ to some $a''\in A^{**}$ at distance at most  $K\lceil \frac{n}{6K}\rceil$ from $a'$.
Then using a move of type (1), we change $f(i^{**})$ from $b_0$ to $b^{*}$. Next, reach $a^{*}$ using moves of type (2), and use the same argument as in the group case.
\end{proof}
\section{Divergence function of Diestel-Leader graphs}\label{section:Diestel-Leader graphs}
The Diestel-Leader graph $DL(p,q)$, with $p, q\geq 2$, is the horocyclic product of homogeneous trees $\mathbb{T}_1$ and $\mathbb{T}_2$ of degrees $p+1$ and $q+1$. We follow \cite{woess} for their description.

Let $\mathbb{T}$ be a homogeneous tree of degree $p+1$, where $p\geq 2$. A geodesic ray $(x_n)$ is a one-sided infinite sequence of vertices of $\mathbb{T}$ such that $\dist(x_i,x_j)=~|j-~i|$ in the tree. Two rays $(x_n)$ and $(y_n)$ are equivalent if there are $N$ and $m$ such that $x_i=y_{i+m}$ for all $i\geq N$. An \textbf{end} is an equivalence class of geodesic rays, and $\partial\mathbb{T}$ is the space of ends. Let $o\in\mathbb{T}$ be a fixed root and $\omega$ a fixed end. Given vertices $a,b\in\mathbb{T}$, their confluent with respect to $\omega$ is $c=a\curlywedge b$ such that $\overline{c\omega}=\overline{a\omega}\cap \overline{b\omega}$, where, given $x\in\mathbb{T}$ and $\gamma\in\partial \mathbb{T}$, $\overline{x\gamma}$ is the ray that starts at $x$ and represents $\gamma$.

The \textbf{Busemann function} $\mathfrak{h}:\mathbb{T}\to\mathbb{Z}$ is $\mathfrak{h}(x)=\dist(x,x\curlywedge o)-\dist(o,x\curlywedge o)$. The \textbf{horocycles} are $H_{k}=\{x\in\mathbb{T}\mid \mathfrak{h}(x)=k\}$. For a vertex $x\in H_k$, its \textbf{parent} is the unique $y\in H_{k-1}$ that $x$ is connected to by an edge. Its \textbf{children} are the vertices in $H_{k+1}$ that are connected to $x$ by an edge; there are $p$ of them. The Busemann function is a \textit{height} function on the homogeneous tree, and the \textbf{horocycles} are levels of the tree with respect to such height.

Now, we take homogeneous trees $\mathbb{T}_1$ and $\mathbb{T}_2$ of degrees $p+1$ and $q+1$, we fix roots $o_1$ and $o_2$ respectively, and ends $\omega_1$ and $\omega_2$ respectively (see Figure \ref{DL}). Let $\gamma_1$ and $\gamma_2$ be bi-infinite geodesics that have one ray in the equivalence classes of $\omega_1$ and $\omega_2$, and that pass through $o_1$ and $o_2$ respectively. The \textbf{Diestel-Leader graph} $DL(p,q)$ is defined as $$DL(p,q)=\{x_1 x_2\in \mathbb{T}_1\times\mathbb{T}_2\mid \mathfrak{h}_1(x_1)+\mathfrak{h}_2(x_2)=0\}.$$
Vertices $x_1 x_2$ and $y_1 y_2$ are connected by an edge in $DL(p,q)$ if and only if $x_1$ and $y_1$ are connected by an edge in $\mathbb{T}_1$ and $x_2$ and $y_2$ are connected by and edge in $\mathbb{T}_2$.

We say that $x_1 x_2\in DL(p,q)$ is at \textit{level} $n$ if $\mathfrak{h}_2(x_2)=n$, and denote it $\ell(x_1 x_2)=n$.

If $x_1 x_2$ and $y_1 y_2$ are two points in $DL(p,q)$, the following formula describes their distance in $DL(p,q)$ (Proposition 3.1, \cite{bertacchi}):
\begin{equation}\label{bertacchi}\dist(x_1 x_2,y_1 y_2)=\dist_1(x_1,y_1)+\dist_2(x_2,y_2)-|\mathfrak{h}_1(y_1)-\mathfrak{h}_1(x_1)|,
\end{equation}
where $\dist_i(x_i ,y_i)$ is the length of the unique path from $x_i$ to $y_i$ in the tree $\mathbb{T}_i$, and $\mathfrak{h}_i(x)$ is the Busemann function of $x$ (if $x_1 x_2\in DL(p,q)$, then $|\mathfrak{h}_1(x_1)|=~|\mathfrak{h}_2(x_2)|$).

\begin{figure}\caption{Construction of DL(2,3)}\label{DL}
\begin{tikzpicture}[scale = 1,
 		d/.style={circle, fill=black, inner sep=0mm, minimum size = 2pt},
 		e/.style={inner sep=0mm, minimum size = 0pt}]
 	
 	\node (e) at (0,0) [d] {};
 	\node (l) at (-1,-1) [d] {};
 	\node (r) at (1,-1) [d] {};
 	\node (ll) at (-2,-2) [d] {};
 	\node (lr) at (-1,-2) [d] {};
 	\node (rl) at (1,-2) [d] {};
 	\node (rr) at (2,-2) [d] {};
	\node (o1) at (2.2,-1.9) {$o_{1}$};
 	\node (lll) at (-3,-3) [d] {};
 	\node (llr) at (-2.25,-3) [d] {};
 	\node (lrl) at (-1.5,-3) [d] {};
 	\node (lrr) at (-0.75,-3) [d] {};
 	\node (rll) at (0.75,-3) [d] {};
 	\node (rlr) at (1.5,-3) [d] {};  
	\node (omega1) at (-1.4,1.7) {$\omega_{1}$};
        \node (gamma1) at (-0.4,0.7) {\textcolor{blue}{$\gamma_{1}$}};
 	\node (rrl) at (2.25,-3) [d] {};
 	\node (rrr) at (3,-3) [d] {};
 	\node (llls) at (-4,-4) [e] {};
 	\node (llrs) at (-2.5,-4) [e] {};
 	\node (lrls) at (-2,-4) [e] {};
 	\node (lrrs) at (-0.5,-4) [e] {};
 	\node (rlls) at (0.5,-4) [e] {};
 	\node (rlrs) at (2,-4) [e] {};
 	\node (rrls) at (2.5,-4) [e] {};
 	\node (rrrs) at (4,-4) [e] {};
 	
 	\draw	(e) -- (l) -- (ll) -- (lll)
 	(l) -- (lr) -- (lrr)
 	(lr) -- (lrl)
 	(ll) -- (llr);
 	\draw [blue] (e) -- (r) -- (rr) -- (rrr);
 	\draw (r) -- (rl) -- (rll)
 	(rl) -- (rlr)
 	(rr) -- (rrl);
	\draw [-stealth, blue](0,0) -- (-2,2);
 	\draw [dotted] 	(lll) -- (llls)
 	(llr) -- (llrs)
 	(lrl) -- (lrls)
 	(lrr) -- (lrrs)
 	(rll) -- (rlls)
 	(rlr) -- (rlrs)
 	(rrl) -- (rrls);
 	\draw [dotted, blue] (rrr) -- (rrrs);
 	
 	\begin{scope}[shift={(6,-3)}, yscale=-1]
 		\node (ef) at (0,0) [d] {};
 		\node (lf) at (-1,-1) [d] {};
 		\node (rf) at (1,-1) [d] {};
		\node (cf) at (0,-1) [d] {};

		\node (o2) at (-0.8,-1.1) {$o_{2}$};

 		\node (llf) at (-2,-2) [d] {};
 		\node (lrf) at (-1,-2) [d] {};
		\node (lcf) at (-1.5,-2) [d] {};
 		\node (rlf) at (1,-2) [d] {};
 		\node (rrf) at (2,-2) [d] {};
		\node (rcf) at (1.5,-2) [d] {};

		\node (crf) at (0.5,-2) [d] {};
 		\node (ccf) at (0,-2) [d] {};
		\node (clf) at (-0.5,-2) [d] {};

 		\node (lllf) at (-3,-3) [d] {};
		\node (llcf) at (-2.8,-3) [d] {};
 		\node (llrf) at (-2.6,-3) [d] {};
		\node (lclf) at (-2.4,-3) [d] {};
 		\node (lccf) at (-2.2,-3) [d] {};
		\node (lcrf) at (-2,-3) [d] {};
 		\node (lrlf) at (-1.8,-3) [d] {};
 		\node (lrcf) at (-1.6,-3) [d] {};
		\node (lrrf) at (-1.4,-3) [d] {};

		\node (cllf) at (-0.8,-3) [d] {};
		\node (clcf) at (-0.6,-3) [d] {};
 		\node (clrf) at (-0.4,-3) [d] {};
		\node (cclf) at (-0.2,-3) [d] {};
 		\node (cccf) at (0,-3) [d] {};
		\node (ccrf) at (0.2,-3) [d] {};
 		\node (crlf) at (0.4,-3) [d] {};
 		\node (crcf) at (0.6,-3) [d] {};
		\node (crrf) at (0.8,-3) [d] {};

		\node (rrrf) at (3,-3) [d] {};
		\node (rrcf) at (2.8,-3) [d] {};
 		\node (rrlf) at (2.6,-3) [d] {};
		\node (rcrf) at (2.4,-3) [d] {};
 		\node (rccf) at (2.2,-3) [d] {};
		\node (rclf) at (2,-3) [d] {};
 		\node (rlrf) at (1.8,-3) [d] {};
 		\node (rlcf) at (1.6,-3) [d] {};
 		\node (rllf) at (1.4,-3) [d] {};
 		
		\node (lllsf) at (-4,-4) [e] {};
		\node (llcsf) at (-3.6,-4) [e] {};
 		\node (llrsf) at (-3.2,-4) [e] {};
		\node (lclsf) at (-2.8,-4) [e] {};
 		\node (lccsf) at (-2.4,-4) [e] {};
		\node (lcrsf) at (-2.1,-4) [e] {};
 		\node (lrlsf) at (-1.8,-4) [e] {};
 		\node (lrcsf) at (-1.6,-4) [e] {};
		\node (lrrsf) at (-1.4,-4) [e] {};

		\node (cllsf) at (-0.8,-4) [e] {};
		\node (clcsf) at (-0.6,-4) [e] {};
 		\node (clrsf) at (-0.4,-4) [e] {};
		\node (cclsf) at (-0.2,-4) [e] {};
 		\node (cccsf) at (0,-4) [e] {};
		\node (ccrsf) at (0.2,-4) [e] {};
 		\node (crlsf) at (0.4,-4) [e] {};
 		\node (crcsf) at (0.6,-4) [e] {};
		\node (crrsf) at (0.8,-4) [e] {};

		\node (rrrsf) at (4,-4) [e] {};
		\node (rrcsf) at (3.6,-4) [e] {};
 		\node (rrlsf) at (3.2,-4) [e] {};
		\node (rcrsf) at (2.8,-4) [e] {};
 		\node (rccsf) at (2.4,-4) [e] {};
		\node (rclsf) at (2.1,-4) [e] {};
 		\node (rlrsf) at (1.8,-4) [e] {};
 		\node (rlcsf) at (1.6,-4) [e] {};
 		\node (rllsf) at (1.4,-4) [e] {};

 		\node (omega2) at (2,1.7) {$\omega_{2}$};
            \node (gamma2) at (1,0.8) {\textcolor{blue}{$\gamma_{2}$}};
 		\draw [blue]	(ef) -- (lf) -- (llf) -- (lllf);
 	\draw	(lf) -- (lrf) -- (lrrf)
		(lf) -- (lcf) -- (lclf)
		(lcf) -- (lccf)
		(lcf) -- (lcrf)
 		(lrf) -- (lrlf)
		(lrf) -- (lrcf)
 		(llf) -- (llrf)
		(llf) -- (llcf)

		 (ef) -- (cf) -- (clf) -- (cllf)
		(cf) -- (ccf) -- (cclf)
		(cf) -- (crf) -- (crlf)
		(clf) -- (clcf)
		(clf) -- (clrf)
		(ccf) -- (cccf)
		(crf) -- (crcf)
		(ccf) -- (ccrf)
		(crf) -- (crrf)
        
        (ef) -- (rf) -- (rrf) -- (rrrf)
        (rf) -- (rlf) -- (rllf)
		(rf) -- (rcf) -- (rcrf)
		(rcf) -- (rccf)
		(rcf) -- (rclf)
 		(rlf) -- (rlrf)
		(rlf) -- (rlcf)
 		(rrf) -- (rrlf)
		(rrf) -- (rrcf);
 		\draw [-stealth, blue](0,0) -- (2,2);
 		\draw [dotted, blue] 	(lllf) -- (lllsf);
		\draw [dotted] (llcf) -- (llcsf)
 		(llrf) -- (llrsf)
 		(lrlf) -- (lrlsf)
		(lrcf) -- (lrcsf)
 		(lrrf) -- (lrrsf)
		(lclf) -- (lclsf)
		(lccf) -- (lccsf)
 		(lcrf) -- (lcrsf)

		(cllf) -- (cllsf)
		(clcf) -- (clcsf)
 		(clrf) -- (clrsf)
 		(crlf) -- (crlsf)
		(crcf) -- (crcsf)
 		(crrf) -- (crrsf)
		(cclf) -- (cclsf)
		(cccf) -- (cccsf)
 		(ccrf) -- (ccrsf)

		(rllf) -- (rllsf)
		(rlcf) -- (rlcsf)
 		(rlrf) -- (rlrsf)
 		(rrlf) -- (rrlsf)
		(rrcf) -- (rrcsf) 		(rrrf) -- (rrrsf)
		(rclf) -- (rclsf)
		(rccf) -- (rccsf)
 		(rcrf) -- (rcrsf);
 	\end{scope}
 	
 	\draw[thick, decorate, decoration={zigzag, pre length=1.6mm, post length=1.6mm}] (rr) -- (lf);
	\node (a) at (11,-3) [e]{};
	\node (b) at (11,-2) [e]{};
	\node (c) at (11,-1) [e]{};
	\node (aa) at (12,-3) {$\ell=-1$};
	\node (bb) at (12,-2) {$\ell=0$};
	\node (cc) at (12,-1) {$\ell=1$};
	\node (d) at (11,-5) [e]{};
	\node (g) at (11,1) [e]{};
	\draw [-stealth]    (d) -- (g);
	\draw (10.8,-2) -- (11.2,-2);
	\draw (10.8,-1) -- (11.2,-1);
	\draw (10.8,-3) -- (11.2,-3);
 	\end{tikzpicture}
\end{figure}

When evaluating the divergence of the graph, because the graph is vertex-transitive, we can fix the point $x_0$ and use the definition of divergence (\ref{defsimpl}). Use the origin $o_1 o_2$ as $x_0$, so that it plays the same role as the identity in groups. Consider the metrics $\norm{x_1 x_2}=\dist(x_1 x_2,o_1 o_2)$, $\norm{x_1}_1=\dist_1(x_1,o_1)$, $\norm{x_2}_2=~\dist_2(x_2,o_2)$, for $x_1\in\mathbb{T}_1$ and $x_2\in\mathbb{T}_2$. Note that $\norm{x_i}\geq |\mathfrak{h}_{i}(x_i)|$, and therefore also $\norm{x_1 x_2}\geq \norm{x_i}_{i}$, for $i\in\{1,2\}$.

\begin{thm}\label{proposition:DL-graphs} Diestel-Leader graphs $DL(p,q)$ have linear divergence for all \\$p,q\geq~2$.
\end{thm}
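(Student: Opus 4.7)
The approach is to exploit vertex-transitivity of $DL(p,q)$: we may fix $x_0 = o_1 o_2$, use definition (\ref{defsimpl}), and by the concatenation trick reduce to joining every $x_1 x_2$ at distance $n$ from $o_1 o_2$ to the fixed reference vertex $a_1 a_2$ by a path of length $O(n)$ avoiding $B(o_1 o_2, n/2)$. The key preliminary fact is the inequality $\norm{y_1 y_2} \geq \max(\dist_1(y_1, o_1), \dist_2(y_2, o_2))$, an immediate consequence of the distance formula together with $\dist_i(y_i, o_i) \geq |\mathfrak{h}_i(y_i)|$. In particular, whenever one tree coordinate of an intermediate vertex is at tree-distance $\geq n/2$ from its basepoint, that vertex is outside $B(o_1 o_2, n/2)$.

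Set $\ell = \ell(x_1 x_2)$ and $d_i = \dist_i(x_i, o_i)$. From the distance formula, $d_1 + d_2 = n + |\ell|$, and combined with $d_i \geq |\ell|$ this forces $|\ell| \leq d_i \leq n$. Let $m_1 = x_1 \curlywedge o_1$, and set $\alpha_1 = (d_1 + \ell)/2 \leq n$ and $\beta_1 = (d_1 - \ell)/2$, so that $m_1$ is at tree-distance $\alpha_1$ from $o_1$ and $\beta_1$ from $x_1$ in $\mathbb{T}_1$; set also $\alpha_2 = (d_2 - \ell)/2 \leq n$. I construct the path in two phases.

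Phase~1 consists of $n - \ell$ consecutive up-moves, each sending the current vertex $(y_1, y_2)$ to (parent of $y_1$ in $\mathbb{T}_1$, some child of $y_2$ in $\mathbb{T}_2$); at every step I exploit the $q \geq 2$ children in $\mathbb{T}_2$ to pick one off the ray $\overline{o_2 \omega_2}$, so $\dist_2$ strictly increases. Because $\alpha_1 \leq n$, the forced parent-chain from $x_1$ first meets $\overline{o_1 \omega_1}$ at $m_1$ after $\beta_1$ steps and then continues along the ray, landing exactly at $a_1$ after $n - \ell$ steps; Phase~1 terminates at $(a_1, y_2)$ at level $n$, with $y_2$ a depth-$(n-\ell)$ descendant of $x_2$. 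Phase~2 consists of $2s$ moves, where $c = y_2 \curlywedge a_2$ has Busemann $n - s$ with $s \leq n + \alpha_2 \leq 2n$: first $s$ down-in-level moves from $(a_1, y_2)$ to some $(a_1', c)$, picking at every step a child of the first coordinate off $\overline{o_1 \omega_1}$ (using $p \geq 2$); then $s$ up-in-level moves, during which the $\mathbb{T}_1$-parent is unique and retraces back to $a_1$, while in $\mathbb{T}_2$ we follow the unique path $c \to a_2$. The total length is $n - \ell + 2s \leq 6n$.

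The step I expect to be the main obstacle is the verification that the constructed path stays outside $B(o_1 o_2, n/2)$; this is a direct but case-heavy substitution into the distance formula. In Phase~1 with $\ell \geq 0$ the norm equals $n - k$ for $0 \leq k \leq \beta_1$ and $d_2 + k$ for $\beta_1 \leq k \leq n - \ell$, with minimum $n - \beta_1 \geq n/2$ since $\beta_1 \leq d_1/2 \leq n/2$; for $\ell < 0$ an extra regime $n - 2\ell - k$ appears for $|\ell| \leq k \leq \beta_1$, but $\beta_1 \leq (n + |\ell|)/2$ still yields the same $n/2$ lower bound. In Phase~2 the norm is at least $2n + \alpha_2$ throughout the descending half and decreases linearly from $2n + \alpha_2$ down to $n$ in the ascending half, hence always at least $n$. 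In every case the $n/2$ lower bound on the norm and the $O(n)$ length bound hold, yielding linear divergence.
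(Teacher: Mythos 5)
Your proof is correct and follows essentially the same route as the paper: climb to level $n$ (where the $\mathbb{T}_1$-coordinate is forced onto the ray $\overline{o_1\omega_1}$ and lands at $a_1$), then repair the $\mathbb{T}_2$-coordinate by a down-up excursion whose $\mathbb{T}_1$-component stays at distance at least $n$ from $o_1$ by branching off the ray. The only (immaterial) slip is the stated value $2n+\alpha_2$ for the norm at the start of Phase~2, which should be $n+2\alpha_2$; the bound $\geq n$ you actually need follows anyway from your inequality $\norm{y_1 y_2}\geq \dist_1(y_1,o_1)$.
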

\begin{proof}
Our goal is to show that any point $x_1 x_2$ such that $\norm{x_1 x_2}=n$ can be connected to a fixed point $a_1 a_2$ with $\norm{a_1 a_2}=n$ outside $B(o_1 o_2, \frac{n}{3})$ by a path of length linear in $n$.
Let $a_1 a_2$ be the point at level $n$ with $a_1\in\gamma_1$ and $a_2\in\gamma_2$, so $a_1 a_2$ has distance $n$ from $o_1 o_2$.
Let $x_1 x_2\in DL(p,q)$ be such that $\norm{x_1 x_2}=n$. 

Let $b_2$ be a child of $x_2$ with $\norm{b_2}_2=~\norm{x_2}_2+~1$, and let $b_1$ be the parent of $x_1$. 
We start our path from $x_1 x_2$ by reaching $b_1 b_2$. Consider a bi-infinite geodesic $\eta_2$ in $\mathbb{T}_2$ that passes through $x_2$ and $b_2$, and which has one of the rays in the equivalence class of $\omega_2$. In $\mathbb{T}_1$, consider the path from $x_1$ to $a_1$, which lies on the geodesic ray $\eta_1$ in the equivalence class of $\omega_1$ that starts from $x_1$.

Move in $DL(p,q)$ along the path $\eta$ from $x_1 x_2$ that goes up at each step until $\ell=n$ whose projections on $\mathbb{T}_1$ and $\mathbb{T}_2$ are parts of $\eta_1$ and $\eta_2$ respectively. 
We stop when we reach a point at level $n$.

The path $\eta$ avoids a ball of radius $\frac{n}{3}$ around $o_1 o_2$. This is because, if $\ell(x_1 x_2)\geq -\frac{n}{3}$, we avoid such a ball by triangle inequality until $\ell= \frac{n}{3}$, and from $\ell= \frac{n}{3}$ to $\ell= n$ we can use that fact that $|\ell|=|\mathfrak{{h}_2}|$ is a lower bound for the metric in $DL(p,q)$. If $\ell(x_1 x_2)\leq -\frac{n}{3}$, for any point $y_2$ on $\eta_2$, we have that $|y_2|_2\geq |x_2|+1>\frac{n}{3}$, using the fact that the path connecting $y_2$ to $o_2$ passes through $x_2$. Since $|y_1 y_2|\geq |y_2|_2$, we avoid $B(o_1 o_2, \frac{n}{3})$ in this part too. 

In either case, we reach a point $a_1 z_2$ at level $n$ in at most $2n$ steps and without entering the ball $B(o_1 o_2,\frac{n}{3})$ at any time. We are left with connecting $a_1 z_2$ to $a_1 a_2$.

In order to connect $a_1 z_2$ and $a_1 a_2$, we move along a path $\alpha$ whose projection on $\mathbb{T}_2$ is $\alpha_2$, the path from $z_2$ to $a_2$. This path goes down until it reaches $a_2 \curlywedge z_2$, and then up again.
The projection of $\alpha$ on $\mathbb{T}_1$ is a path from $a_1$ that goes down at every step and that passes through another child of $a_1$ than the one on the path from $o_1$ to $a_1$, and then up again along the same path.

In the section from $\ell=n$ to $\ell=\frac{n}{3}$, the lower bound given by $\ell$ implies that we avoid a ball of radius $\frac{n}{3}$. Next, consider a point $y_1 y_2$ on $\alpha$ with $\ell\leq \frac{n}{3}$. In this case, $|y_1 y_2|\geq |y_1|_1=\dist_1(y_1, a_1)+|a_1|_1\geq n$. The path $\alpha$ avoids $B(o_1 o_2,\frac{n}{3})$, and it has length at most $4n$.

We can therefore connect $a_1 a_2$ to $x_1 x_2$ using the concatenation of $\eta$ and $\alpha$, which has length at most $6n$ and which does not enter $B(o_1 o_2, \frac{n}{3})$.
\end{proof}

\subsection{Divergence function of horocyclic products}\label{section:horocyclic products} The above argument can be generalised to show the following result.
\begin{thm}\label{thm:horocyclic}
    Horocyclic products of proper, geodesically complete, Busemann $\delta$-hyperbolic spaces that are uniformly not a quasi-line have linear divergence.
\end{thm}
A metric space is \textbf{geodesically complete} if all geodesics can be extended infinitely. A metric space is \textbf{Busemann} if the distance between any pair of geodesics parametrized by arclength is a convex function. 

Let $X_1$ and $X_2 $ be proper, geodesically complete, Busemann $\delta$-hyperbolic spaces. The horocyclic product $X_1\bowtie X_2$ was defined by Ferragut \cite{ferragut-thesis} as follows.

Let $\omega_1$ be a point in the Gromov boundary of $X_1$, $\omega_2$ be a point in the Gromov boundary of $X_2$, and $o_1$ basepoint in $X_1$, $o_2$ basepoint in $X_2$. Fix $\gamma_1$ and $\gamma_2$ geodesics that represent $\omega_1$ and $\omega_2$ respectively, and that pass through $o_1$ and $o_2$, respectively. Consider Busemann functions, or heights, with respect to $\omega_i$ and $o_i$, i.e. $\mathfrak{h}_i(x)=\lim \sup_{t\to\infty} \dist_i(x, \gamma_i(t)) -t$ (often denoted $\beta_{(\omega_i, o_i)}(x)$).

The horocyclic product is $$X_1\bowtie X_2:=\{x_1 x_2\in X_1\times X_2\mid \mathfrak{h}_1(x_1)+\mathfrak{h}_2(x_2)=0\}.$$
It is equipped with the metric $\dist_{\bowtie}$, defined as follows. Let $N$ be an admissible norm in $\mathbb{R}^2$, that is a norm such that $N(a,b)\geq \frac{a+b}{2}$ for all $a,b\in\mathbb{R}$. All norms on $\mathbb{R}^2$ are equivalent, so there is $C_N\geq 1$ such that $N(a,b)\leq C_N \frac{a+b}{2}$. The metric $\dist_{\bowtie}$ is the length path metric induced by the distance $N(\dist_1 , \dist_2)$. 
For $x, y\in X_1\bowtie X_2$, we have 
\begin{align*}
    \dist_{\bowtie}(x,y)=\inf\{l_N(\gamma)\mid \gamma\ \text{path in}\ X_1\bowtie X_2\ \text{from}\ x\ \text{to}\ y\}.
\end{align*}

A \textbf{vertical geodesic ray} (resp. line) in $X_i$ is a geodesic ray (resp. line) in the equivalence class of $\omega_i$ (resp. which has one of the rays in the equivalence class of $\omega_i$). Vertical geodesics are parametrized by arc-length by their height, so a certain displacement along a vertical geodesic corresponds to the same change of height. If a metric space is Busemann, then the vertical geodesic ray starting at any point is unique.

If $X_1$ and $X_2$ are proper, geodesically complete, Busemann $\delta$-hyperbolic spaces, we have the following formula for the metric (Cor. 4.13 in \cite{ferragut-paper}): there exists $C>0$ such that for $p,q\in X_1 \bowtie X_2$, 
\begin{equation}\label{horocyclicmetric}
    |\dist_{\bowtie}(p,q)-(\dist_1(p_1,q_1)+\dist_2(p_2,q_2)-\Delta \mathfrak{h}(p,q))|\leq C,
    \tag{$\bigstar$}
\end{equation}
    and therefore $\dist_{\bowtie}(p,o_1 o_2)\geq~ \dist_1(p_1,o_1) -~C\geq~ |\mathfrak{h}(p_1)|-~C$.  For a point $x_1 x_2\in X_1 \bowtie X_2$, its level is $\ell(x_1 x_2)=\mathfrak{h}_2(x_2)$. 

For more details about this construction, see \cite{ferragut-paper} and \cite{ferragut-thesis}. Examples of horocyclic products include Baumslag-Solitar groups $BS(1,p)$, when the spaces are $\mathbb{H}^2$ and $\mathbb{T}_{p}$ (the homogeneous tree of degree $p+1$), Diestel-Leader graphs $DL(p,q)$, when the spaces are $\mathbb{T}_p$ and $\mathbb{T}_q$, and Sol, one of the eight Thurston geometries, when both spaces are $\mathbb{H}^2$ (see \cite{woess-survey} for a survey on this). For these three examples, linear divergence is already established (Theorem \ref{proposition:DL-graphs} and because solvable groups have linear divergence), but Theorem \ref{thm:horocyclic} works more generally. 

A metric space is \textbf{uniformly not a quasi-line} if for all $M\geq 0$, there exists $R_M\geq 0$ such that for all geodesics $\gamma$, for all $x\in\gamma$, there exists $y\in B(x,R_M)$ such that $\dist(y, \gamma)\geq M$. 
   \begin{figure}[ht]
  \centering
  \begin{minipage}{0.4\textwidth}
    \centering
    \begin{tikzpicture}[baseline=(current bounding box.center),scale=1]
      \def\R{3}

      \draw[line width=1pt] (0,0) circle (\R);
      \path[use as bounding box] (-\R,-\R) rectangle (\R,\R);

      \draw[line width=1pt] (0,-\R) -- (0,\R);
      \node[right] at (0,-\R+1)       {\small $\gamma_1$};
      \node[above] at (0,\R)          {\small $\omega_1$};

      \fill (0,0) circle (1pt)
        node[above right,inner sep=1pt] {\small $o_1$};
      \node[above left] at (-0.7*\R,0.7*\R) {\small $X_1$};

      \draw[blue, line width=1pt, dotted] (0,0.75*\R) circle (0.25*\R);
      \draw[blue, line width=1pt, dotted] (0,0.25*\R) circle (0.75*\R);
      \draw[line width=1pt, dotted] (0,0.3*\R)  circle (0.7*\R);
      \fill (0,0.5*\R) circle (2pt) node[above right] {\small $a_1$};

      \coordinate (C) at (-4.62813,\R);
      \def\r{4.62813}
      \draw[red, ->, line width=1.2pt]
        plot [variable=\t,domain=-49.43:-18.5,smooth,samples=60]
          ({-4.62813 + \r*cos(\t)}, {3 + \r*sin(\t)});

        \node[red] at (-0.2*\R,0.1*\R) {\small $\eta_1$};
      \draw[red, dotted, line width=1.2pt]
        plot [variable=\t,domain=-18.5:0,smooth,samples=60]
          ({-4.62813 + \r*cos(\t)}, {3 + \r*sin(\t)});

      \fill[red] (-1.6,-0.5) circle (2pt)
        node[below right,inner sep=1pt] {\small $x_1$};
      \coordinate (Z) at 
        ({-4.62813 + \r*cos(-18.5)}, {3 + \r*sin(-18.5)});
      \fill[red] (Z) circle (2pt)
        node[above left,inner sep=1pt] {\small $z_1$};
    \end{tikzpicture}
  \end{minipage}
  \hfill
  \begin{minipage}{0.4\textwidth}
    \centering
    \begin{tikzpicture}[baseline=(current bounding box.center),scale=1]
      \def\R{3}

      \draw[line width=1pt] (0,0) circle (\R);
      \path[use as bounding box] (-\R,-\R) rectangle (\R,\R);

      \draw[line width=1pt] (0,-\R) -- (0,\R);
      \node[below] at (0,-\R)       {\small $\omega_2$};
      \node[right] at (0,\R-1)       {\small $\gamma_2$};

      \fill (0,0) circle (1pt)
        node[above right,inner sep=1pt] {\small $o_2$};
      \node[above right] at (0.7*\R,0.7*\R) {\small $X_2$};

      \draw[blue, line width=1pt, dotted] (0,-0.25*\R) circle (0.75*\R);
      \draw[blue, line width=1pt, dotted] (0,-0.75*\R) circle (0.25*\R);
      \draw[line width=1pt, dotted] (0,-0.7*\R)  circle (0.3*\R);

      \coordinate (P2) at (-0.5,-1.4);
      \fill[red] (P2) circle (2pt) node[below left,inner sep=1pt] {\small $x_2$};

      \coordinate (C) at (-2.9,-\R);
      \def\r{2.9}
      \draw[red, dotted, line width=1.2pt]
        plot[variable=\t,domain=0:33.69,smooth,samples=60]
          ({-2.9 + \r*cos(\t)}, {-3 + \r*sin(\t)});
      \draw[red, ->, line width=1.2pt]
        plot[variable=\t,domain=33.69:75,smooth,samples=60]
          ({-2.9 + \r*cos(\t)}, {-3 + \r*sin(\t)});

    \node[red] at (-0.3*\R,-0.2*\R) {\small $\eta_2$};
      \draw[red, dotted, line width=1.2pt]
        plot[variable=\t,domain=75:92,smooth,samples=60]
          ({-2.9 + \r*cos(\t)}, {-3 + \r*sin(\t)});

      \fill (0,0.5*\R) circle (2pt)
        node[above left] {\small $a_2$};
      \coordinate (P3) at ({-2.9 + \r*cos(75)}, {-3 + \r*sin(75)});
      \fill[red] (P3) circle (2pt) node[below left,inner sep=1pt] {\small $z_2$};
    \end{tikzpicture}

  \end{minipage}
  \caption{The paths $\eta_1$ and $\eta_2$ in the proof of Theorem \ref{thm:horocyclic}. The blue dotted circles are the horospheres with $\mathfrak{h}=n$ and $\mathfrak{h}=-n$.}\label{proof of horocyclic1}
  \centering
\hspace{2cm}
  
  \begin{minipage}{0.4\textwidth}
    \centering
    \begin{tikzpicture}[baseline=(current bounding box.center),scale=1]

  \def\R{3}

  \draw[line width=1pt] (0,0) circle (\R);
   \path[use as bounding box] (-\R,-\R) rectangle (\R,\R);

\draw[blue, line width=1pt, dotted] (0,0.75*\R) circle (0.25*\R);
\draw[line width=1pt, dotted] (0,0.71*\R) circle (0.29*\R);
\draw[line width=1pt, dotted] (0,0.58*\R) circle (-0.42*\R);
  \draw[line width=1pt] (0,-\R) -- (0,\R);
  \node[right] at (0,-\R+1)          {\small $\gamma_1$};
  \node[above] at (0,\R)             {\small $\omega_1$};

  \fill (0,0) circle (1pt) 
    node[above right,inner sep=1pt]  {\small $o_1$};
  \node[above left] at (-0.7*\R,0.7*\R) {\small $X_1$};

  \coordinate (C) at (-6.63,3);
  \def\r{6.63}

  \draw[green!60!black, dotted, line width=1.2pt]
    plot [variable=\t, domain=0:-13, smooth, samples=50]
      ({-6.63 + \r*cos(\t)}, {3 + \r*sin(\t)});

  \draw[green!60!black, <->, line width=1.2pt]
    plot [variable=\t, domain=-13:-22, smooth, samples=50]
      ({-6.63 + \r*cos(\t)}, {3 + \r*sin(\t)});
\node[green!60!black] at (-0.05*\R,0.3*\R) {\small $\alpha_1$};
  \coordinate (pt2) at ({-6.63 + \r*cos(-13)}, {3 + \r*sin(-13)});
  \fill[green!60!black] (pt2) circle (2pt)
    node[above left, inner sep=1pt] {\small $a_1''$};

    \coordinate (pt2) at ({-6.63 + \r*cos(-15)}, {3 + \r*sin(-15)});
  \fill[green!60!black] (pt2) circle (2pt)
    node[left, inner sep=1pt] {\small $a_1'$};

\end{tikzpicture}

\end{minipage}
  \hfill
  \begin{minipage}{0.4\textwidth}
    \centering
    \begin{tikzpicture}[baseline=(current bounding box.center),scale=1]

  \def\R{3}

  \draw[line width=1pt] (0,0) circle (\R);
  \path[use as bounding box] (-\R,-\R) rectangle (\R,\R);
   \draw[line width=1pt, dotted] (0,-0.29*\R) circle (0.71*\R);
\draw[blue, line width=1pt, dotted] (0,-0.25*\R) circle (0.75*\R);
\draw[line width=1pt, dotted] (0,-0.42*\R) circle (0.58*\R);

  \draw[line width=1pt] (0,-\R) -- (0,\R);
  \node[below] at (0,-\R) {\small $\omega_2$};
  \node[right] at (0,\R-1)  {\small $\gamma_2$};

  \fill (0,0) circle (1pt) node[above right,inner sep=1pt] {\small $o_2$};
  \node[above right] at (0.7*\R,0.7*\R) {\small $X_2$};

  \coordinate (A) at ($(0,-\R)!0.75!(0,\R)$);
  \draw[line width=1pt]
    ($(A)+(-0.1,0)$) -- ($(A)+(0.1,0)$)
    node[above right,inner sep=1pt, green!60!black] {\small $a_2$};
\fill[green!60!black] (A) circle (2pt);
  \coordinate (C) at (-1.45*\R,1.25*\R);
  \def\r{1.6326*\R}
 
  \draw[green!60!black, ->, line width=1.2pt]
    plot [variable=\t, domain=-62:-27, smooth, samples=60]
      ({-1.45*\R + \r*cos(\t)}, {1.25*\R + \r*sin(\t)});
\node[green!60!black] at (-0.4*\R,0.1*\R) {\small $\alpha_2$};
\coordinate (P3) at ({-0.7*\R}, {-0.2*\R});
      \fill[green!60!black] (P3) circle (2pt) node[below right,inner sep=1pt] {\small $z'_2$};
      
\end{tikzpicture}

  \end{minipage}
   \caption{The paths $\alpha_1$ and $\alpha_2$ in the proof of Theorem \ref{thm:horocyclic}: $\alpha_1$ starts at $a'_1$, goes down and then up again up to $a''_1$; $\alpha_2$ goes from $z'_2$ to $a_2$.}\label{proof of horocyclic2}
\end{figure}

\begin{proof}[Theorem \ref{thm:horocyclic}]
The goal is to connect any point $x_1 x_2$ on a sphere of radius $n$ around $o_1 o_2$ to a fixed point $a_1 a_2$, also at distance $n$ from $o_1 o_2$, using a path that avoids a ball of radius $\frac{n}{3}-C$. The argument is depicted in Figures \ref{proof of horocyclic1} and \ref{proof of horocyclic2} using the Poincaré disc model for the hyperbolic plane.

Let $a_1 a_2$ be the point of $X_1\bowtie X_2$ with $a_1\in \gamma_1$, $a_2\in \gamma_2$ such that $\ell(a_1 a_2)=~n$. 
Consider a point $x_1 x_2$ with $\dist_{\bowtie}(x_1 x_2)=n$. Up to some initial short path of length at most $3\delta +R_{3\delta}+C$, we may assume that $\dist_2(x_2,\gamma_2)\geq~3\delta$. This is because of our assumption that $X_2$ is uniformly not a quasi-line.
    
    We can define a path similar to $\eta$ in the case of Diestel-Leader graphs in order to reach level $n$. 
    Let $\eta_1$ be the vertical geodesic through $x_1$ in $X_1$. Let $\eta_2$ be the vertical geodesic through $x_2$ in $X_2$. 

    Like for $DL$-graphs, move along $\eta_1$ in $X_1$ and $\eta_2$ in $X_2$ until we reach level $n$. We reach the point $z_1 z_2$, for some $z_1\in X_1$, $z_2\in X_2$.
    
    Since $\dist_2(x_2,\gamma_2)\geq ~ 3\delta$, and $\gamma_2$ and $\eta_2$ are vertical geodesics, by the Busemann condition, $\dist_2(\eta_2(t), \gamma_2(t))\geq 3\delta $ for $t\leq ~-\mathfrak{h}_2(x_2)$. 
    
    If $\ell(x_1 x_2)\geq \frac{n}{3}$, for any $y=y_1 y_2\in~\eta$ we can use the lower bound for $\dist_{\bowtie} (y_1 y_2 , o_1 o_2)$ given by $|\mathfrak{h}_2(y_2)|-C$ to conclude that $\eta$ avoids $B(o_1 o_2,\frac{n}{3}-C)$. 
    
    If $-\frac{n}{3}\leq \ell(x_1 x_2)\leq\frac{n}{3}$, then we avoid $B(o_1 o_2,\frac{n}{3}-C)$ up to level $\frac{n}{3}$ by triangle inequality, as we are moving along vertical geodesics, and from level $\frac{n}{3}$ using the lower bound given by the height.
    
    If $\ell(x_1 x_2)\leq -\frac{n}{3}$, for any $y=y_1 y_2\in~\eta$, consider the quadrilateral with vertices $y_2, x_2, q_2, o_2$, where $q_2$ is the point on $\gamma_2$ at the same height as $x_2$. The quadrilateral is $2\delta$-~slim, hence $\dist_2(y_2,o_2)\geq \frac{n}{3}-8\delta$. This is because the point on $\gamma_2 $ at level $-\frac{n}{3}+6\delta$ has distance $\geq 3\delta$ from $[x_2,y _2]$ and from $[q_2,x_2]$ (by the Busemann condition, since $\gamma_2$ and $\eta_2$ are both vertical geodesics, and since the height of a point in $[q_2,x_2]$ does not exceed $-\frac{n}{3}+2\delta$), so its distance from $[y_2,o_2]$ has to be $\leq 2\delta$. 

     Hence $\eta$ connects $x_1 x_2$ to $z_1 z_2$ avoiding $B(o_1 o_2,\frac{n}{3}-C)$, and has length at most $2n+3\delta ~+R_{3\delta}+~2C$, since we move along vertical geodesics and using (\ref{horocyclicmetric}).

    Next we reach $a_1 z_2$. We do it using the geodesic segment $[z_1,a_1]$ in $X_1$, and moving along $\eta_2$ in $X_2$. This path has length at most $\dist_1(z_1,a_1)+~C~\leq 4n+~2C$, and it avoids $B(o_1, n-2\delta)$. To see this, we may consider the quadrilateral with vertices $z_1$, $a_1$, and with two vertices on $\eta_1$, $\gamma_1$ at height $2n$, which is $2\delta$-slim. 

    From $a_1 z_2$, we get to $a'_1 z'_2$, where $a'_1$ is a point such that $\dist(a'_1,\gamma_1)\geq 3\delta$ and $\dist(a_1, a'_1)\leq R_{3\delta}$, whose existence is guaranteed by $X_1$ being uniformly not a quasi-line (and $z'_2$ is the point at the same height on $\eta_2$). This short path of length at most $R_{3\delta}+C$ allows us to find a new geodesic that diverges from $\gamma_1$.
    
     We now define a path similar to $\alpha$ in the case of Diestel-Leader graphs. 
    Let $\alpha_1$ be the vertical geodesic through $a'_1$ in $X_1$. Let $\alpha_2$ be the geodesic segment $[z'_2, a_2]$ in $X_2$. The path $\alpha$ starts at $a'_1 z'_2$, it has projection $\alpha_2$ on $X_2$, and the projection on $X_1$ is contained in $\alpha_1$. It reaches a point $a''_1 a_2$, where $a''_1$ is the point on $\alpha_1$ at height $n$. 

This path has length at most $4n+3C+R_{3\delta}$ and it avoids $B(o_1 o_2, \frac{n}{3}-C)$. Indeed, for any $y_1 y_2\in \alpha$ with $\ell(y_1 y_2)\geq\frac{n}{3}$, we can use the lower bound given by the height. If $\ell(y_1 y_2)<\frac{n}{3}$, then $\dist_1(y_1,o_1)\geq n-8\delta$ by considering the quadrilateral with vertices $y_1, a''_1, a_1, o_1$, which is $2\delta$-slim.
  Finally, a short path of length at most $2R_{3\delta}+C$ allows us to reach $a_1 a_2$. 

Concatenating the paths obtained, we can connect $x_1 x_2$ to $a_1 a_2$ with a path of length $\leq 10n+3\delta+ 9C+5R_{3\delta}$ avoiding $B(o_1 o_2, \frac{n}{3}-C)$.

Note that in this case fixing the third point in the definition of divergence is in principle not possible, as the isometry group need not act transitively. However in this case, if we have a third point $w_1 w_2$, we can just replace $\gamma_i$ with vertical geodesics through $w_i$, and produce a similar path.

\end{proof}

\section{Divergence function of Houghton groups}\label{section:houghton groups}
For $m\geq 1$, we define $R_m$ to be $\{1,\dots,m\}\times\mathbb{N}^+$, the graph with $m$ pairwise disjoint rays and where the edges connect vertices $(i,k)$ and $(i,k+1)$, for $i\in~\{1,\dots, m\}$ and $k\geq 1$. The \textbf{Houghton group} $\mathcal{H}_m$, introduced by Houghton in \cite{Houghton},  is the group of permutations of $R_m$ that are eventually translations for each of the rays $\{i\}\times \mathbb{N}^+$.

For example, $\mathcal{H}_1$ is $\FSym(\N)$, the finitary symmetric group on $R_1$, which is not finitely generated. Next, $\mathcal{H}_2$ is $\FSym(\Z)\rtimes\Z$. The derived subgroup of $\mathcal{H}_m$ is either $\FAlt(R_m)$ or $\FSym(R_m)$ (see \cite{commH}), depending on $m$, so these groups are far from being solvable. In this section we show the following fact, using a similar argument as the one for wreath products.
\begin{thm}\label{houghton} The Houghton groups $\mathcal{H}_m$ have linear divergence for all $m\geq 2$.
\end{thm}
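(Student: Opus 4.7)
The plan is to adapt the case-by-case analysis used in the proof of Theorem \ref{proposition:wreath}. First I would fix a finite symmetric generating set $\mathcal{S}$ of $\mathcal{H}_m$ consisting of translation generators $\tau_1,\dots,\tau_{m-1}$, where $\tau_i$ shifts ray $1$ up by one step and ray $i+1$ down by one step, together with the transposition $\sigma$ that swaps the two base vertices $(1,1)$ and $(2,1)$. Every element $g\in \mathcal{H}_m$ then decomposes uniquely as $g=\pi\cdot\tau$, where $\tau$ is a pure eventual translation with amounts $(t_1,\dots,t_m)$ satisfying $\sum_i t_i=0$, and $\pi$ is a finitely supported permutation of $R_m$. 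Write $\norm{x}_{R_m}$ for the graph distance in $R_m$ from $x$ to the base vertex $(1,1)$. I would first prove the Houghton analog of Lemma \ref{lemma:lamps}: if $\pi(x)\neq x$ then $\norm{g}_{\mathcal{H}_m}\geq \norm{x}_{R_m}$, since any word for $g$ must reach $x$ via translation generators in order to act non-trivially there.

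Following the wreath-product argument, it suffices to fix a single target $g^{*}$ with $\norm{g^{*}}_{\mathcal{H}_m}=n$ and connect every $g$ of length $n$ to $g^{*}$ by a path of length $O(n)$ avoiding $B_{\mathcal{H}_m}(1_{\mathcal{H}_m},n/6)$. A convenient choice is $g^{*}=\tau_1^{n-1}\sigma\tau_1^{-(n-1)}$, a transposition supported on points at $R_m$-distance roughly $n-1$, which plays the role of an element with a single \emph{far-out lamp}, analogous to $f^{*}h_0$ in Theorem \ref{proposition:wreath}. For a given $g=\pi\tau$ of length $n$, I would split into four cases mirroring those of Theorem \ref{proposition:wreath}. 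In Case (1), if $\pi$ already has a lamp at the far-out position of $g^{*}$, a bounded number of local transpositions converts $g$ into $g^{*}$. In Case (2), if $\pi$ displaces some $x$ with $\norm{x}_{R_m}\geq n/6$, I would travel with translation generators to the far-out position of $g^{*}$, light a lamp there, then cancel all of $\pi$'s displacements and return; throughout, either $x$ or the new far-out lamp remains on, so the lamp-lemma analog keeps the path outside the ball. In Case (3), if every displacement of $\pi$ lies within $R_m$-distance $n/6$ but $\sum_i |t_i|\geq n/6$, a conjugate of $\sigma$ by a short stretch of translation generators lights a far-out lamp, reducing to Case (2). In Case (4), if $\pi$'s support is close and $\sum_i |t_i|< n/6$, I would first apply a stretch of translation generators to inflate the translation amounts until a far-out lamp can be lit, maintaining distance $\geq n/6$ from the identity by triangle inequality (as in Case 4 of Theorem \ref{proposition:wreath}), and then continue with Case (3).

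The hard part will be Case (4), where one must verify the Houghton analog of inequality \eqref{eq:ineq}: along the initial stretch of translation moves, the minimal word length of the current element does not drop below $n/6$. This requires a joint lower bound for the Cayley metric in terms of the translation amounts $|t_i|$, the $R_m$-reach of the finitely supported part $\pi$, and the cost of the transpositions used to realize $\pi$, analogous to the additive decomposition $\norm{g}_G=\sum_i\norm{e_i}_F+\sum_j\norm{h_j}_H$ in the wreath-product proof. Once this lower bound is established, concatenating the path from $g$ to $g^{*}$ with the reverse of the path from $g'$ to $g^{*}$ yields a linear path between any two elements $g,g'$ of length $n$ outside $B_{\mathcal{H}_m}(1_{\mathcal{H}_m},n/6)$, establishing the claimed linear divergence.
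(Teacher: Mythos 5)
Your overall strategy (reduce to connecting every $g$ of length $n$ to one fixed far-out transposition, using a ``lamp lemma'' to certify that a displaced point far from the origin keeps you outside the ball) is sound and matches the spirit of the paper, but there is a genuine gap at the step you yourself flag as the hard part, and it is not a gap that closes by routine effort. Your Case (4) needs an analog of inequality \eqref{eq:ineq}, i.e.\ a lower bound on $\norm{gw_1\cdots w_i}$ that survives a stretch of up to $n$ translation moves. In the wreath-product proof this came from Parry's \emph{exact} additive formula $\norm{g}_G=\sum_i\norm{e_i}_F+\sum_j\norm{h_j}_H$, and no such exact formula is available for $\mathcal{H}_m$ (only quasi-isometric estimates). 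Worse, your decomposition $g=\pi\tau$ does not interact additively with right multiplication by translation generators: there is no canonical ``pure eventual translation'' with prescribed amounts, and the product of two translation generators can itself acquire a finitely supported correction (e.g.\ $[g_1,g_2]$ is a transposition), so $\pi$ changes as you translate and the bookkeeping you propose does not go through as stated. A secondary issue is Case (3): $\sum_i|t_i|\geq n/6$ only guarantees a large translation amount on \emph{some} pair of rays, and a conjugate of $\sigma$ (which lives on rays $1$ and $2$) by a short word need not produce a far-out displaced point if the large translation occurs on other rays.

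The paper sidesteps all of this with Lemma \ref{Lemma:rays}: it takes the bi-infinite geodesic $\dots,gg_1^{-1},g,gg_1,\dots$ through $g$, observes that one of its two rays must avoid $B(1,\frac n2)$ for purely metric reasons, travels $3n$ steps along that safe ray, plants a transposition at distance $\geq 2n$ from the origin, then undoes $g$ entirely (the far transposition keeping the path outside the ball), installs the target transposition at distance about $n$, and finally removes the far one. This requires no structural word-length formula and no case analysis on $g$. If you want to salvage your Case (4) without that lemma, the cheapest repair is the one used in Case 4 of the permutational wreath product: plant an intermediate transposition at distance about $n/3$ using a word of length about $2n/3$, so that the plain triangle inequality $\norm{gw}\geq\norm g-\norm w$ already keeps you outside $B(1,\frac n6)$ during that stretch; but you must also track where that transposition actually lands after being pushed around by the translation part of $g$, which is where your hypothesis $\sum_i|t_i|<n/6$ gets used. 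As written, the proposal does not establish the theorem.
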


The Houghton group $\mathcal{H}_2$, also known as the \textit{lampshuffler group}, is the group $\FSym(\mathbb{Z})\rtimes \mathbb{Z}$, and can also be defined as the subgroup of the group of bijections of $\{\dots, -1,1,2,\dots\}=\mathbb{Z}\setminus\{0\}$ generated by a translation $t$ and a transposition $a=(-1\ 1)$. 
The group $\mathcal{H}_2$ has presentation $$\langle a,t\mid a^2=1, (a a^{t})^{3}=1,[a,a^{t^{k}}]=1,k\geq 2\rangle.$$
We can think of an element of $\mathcal{H}_2$ as a cursor that starts between $-1$ and $1$, moves on $\mathbb{Z}\setminus\{0\}$ to the right for each occurrence of $t$ and to the left for each occurrence of $t^{-1}$, together with transpositions of two consecutive elements, corresponding to the occurrences of $a$.

The cursor is always between two integers; define its \textbf{position} to be the integer to its right, so that the initial position of the cursor is $1$.

We consider now Houghton groups $\mathcal{H}_m$ with $m\geq 3$. Define $g_i$ to be the element of $\mathcal{H}_m$ that translates the union of rays $i$ and $i+1$. Then, the commutator $[g_1,g_2]$ is the transposition of $(1,1)$ and $(2,1)$. It follows that in this case $\mathcal{H}_m$ is generated by $S=\{g_1,\dots, g_{m-1}\}$. Consider the Cayley graph of $\mathcal{H}_m$ with respect to the generating set $S\cup S^{-1}$. 

\begin{proof}
There are two parts to this proof; in the first part we show the result for $\mathcal{H}_2$, and in the second part we focus on $\mathcal{H}_m$ with $m\geq 3$.
We use $S=\{t,a\}$ as generating set for $\mathcal{H}_2$ and consider the Cayley graph of $\mathcal{H}_2$ with respect to $S\cup S^{-1}=\{a,t,t^{-1}\}$. The goal of the proof is to produce a path with length linear in $n$ and avoids $B(1,\frac{n}{2})$, that connects any element $g$ with $\norm{g}=n$ to the fixed element $t^{n-1}a$, which also has length $n$.

Let $g\in \mathcal{H}_2$ be such that $\norm{g}=n$. 
The following is a bi-infinite geodesic through $g$: $$\dots, gt^{-1},g,gt,\dots.$$ 
By Lemma \ref{Lemma:rays}, one of its rays does not intersect $B(1,\frac{n}{2})$. Move by $3n$ steps along such ray, that is multiply by $t^{k}$, where $k=3n$ or $k=-3n$. Then apply a transposition $a$. Since $\norm{g}=n$, the position of the cursor is in $[-n,n+1]$ after applying $g$ to $\mathbb{Z}\setminus\{0\}$. It will therefore be either in $[2n+1, 4n+1]$ or $[-4n,-2n]$ after applying $t^{k}$. Let $\ell$ be the sum of the exponents of the occurrences of $t$ in any word representing $g$. The transposition we applied after $t^{k}$ is at position $\ell+k+1$, if it is in $[2n+1, 4n+1]$, or at position $\ell+k$, if it is in $[-4n,-2n]$.

Next, we undo all the transformations given by $g$ and apply a transposition at $n$. That is, we first apply $t^{-k}$ to reach the position the cursor is at after applying $g$, and then $g^{-1}$, to undo the transformations given by $g$. The element we get to is $gt^{k}at^{-k}g^{-1}=t^{k+\ell}at^{-k-\ell}$.

We proceed by multiplying by $t^{n-1}a$ in order to apply the transposition $(n-1\ n)$. 
This will be disjoint from the one at $k+\ell$, since $|k+\ell|\geq 2n$, which ensures that they don't cancel out.

Next, we undo the transposition at $k+\ell$. That is, apply $t^{-(n-1)+k+\ell}at^{-k-\ell+n-1}$. 
The element we obtain has cursor at $n$ and a transposition $(n-1\ n)$, that is we are left with the element $t^{n-1}a$, which has length $n$.

More precisely, the path from $g$ to $t^{n-1}a$ consists in multiplying $g$ by $t^{k}at^{-k}g^{-1}t^{n-1}at^{-(n-1)+k+\ell}at^{-k-\ell}t^{n-1}$.

At any step of the path, we are outside $B(1,\frac{n}{2})$, since we are either on the chosen ray that avoids this ball, or we are at an element that has a transposition at $q$, with $|q|\geq n$.
If an element has a transposition at position $q$, then its metric is at least $|q|$. 
The path we used has length at most $18n$.

We can therefore connect any element of length $n$ to the fixed element $t^{n-1}a$ in linear time outside $B(1, \frac{n}{2})$. Hence any two elements of length $n$ can be connected by a path of linear length outside $B(1, \frac{n}{2})$, and divergence is linear.

We consider now $\mathcal{H}_m$ with $m\geq 3$.
We will adapt the argument for linearity of divergence of $\mathcal{H}_2$ to this case, by replacing $a$ with $[g_1,g_2]$, which has length $4$, and we take $g_1^{n-4}[g_1,g_2]$ as fixed element of length $n$. 

Take $g$ of length $n$. Consider the bi-infinite geodesic $\dots,gg_1^{-1},g,gg_1,\dots$ through $g$ in the Cayley graph. Applying Lemma \ref{Lemma:rays}, one of the infinite rays of this geodesic does not enter $B(1,\frac{n}{2})$. Let $R$ be an infinite ray starting at $g$ that does not enter $B(1,\frac{n}{2})$. Move along $R$ by $3n$ steps and reach $gg_1 ^{k}$, where $k=\pm 3n$. That corresponds to translating the union of rays $1$ and $2$ of $R_m$ either to the left or to the right, depending on which ray is chosen.

Next, we apply the transposition $[g_1,g_2]$, and we multiply by $g_1^{-k}$. Since $\norm{g}=n$, the part of $R_m$ where $g$ does not act only by translation is inside $\{(p,q)\mid q\leq n\}$. The element we obtain has the same configuration of $g$ on $R_m$, with an extra transposition $((1,y)(1,y+1))$ or $((2,y)(2,y+1))$ (depending on the sign of $k$) with $y\geq 2n$. The new transposition is therefore disjoint from the part of $R_m$ where $g$ does not act only by translation.

We then multiply our element by $g^{-1}$, in order to undo the action of $g$ on $R_m$. The element we reach is $gg_1^{k}[g_1,g_2]g_1^{-k}g^{-1} $, which is $g_1^{3n+\ell_1}[g_1,g_2]g_1^{-3n-\ell_1} $ if $k=3n$, and $g_1^{-3n+\ell_2 -\ell_1}[g_1,g_2]g_1^{3n-\ell_2+\ell_1} $ if $k=-3n$. Here $\ell_i$ is the sum of the exponents of $g_i$ in a (any) word representing $g$, i.e. the amount by which the union of rays $i$ and $i+1$ is translated by $g$. If $k=3n$, then $g_1^{k}[g_1,g_2]g_1^{-k}$ is a transposition on the first ray, so any generator $g_i$ with $i\neq 1$ commutes with it, and conjugation by $g_1$ contributes positively to the exponent $k$. If $k=-3n$, then the transposition $g_1^{k}[g_1,g_2]g_1^{-k}$ is on the second ray, and any generator $g_i$ with $i\neq 1,2$ commutes with it; conjugation by $g_1$ contributes negatively to the exponent and by $g_2$ positively.

In either case, since $3n-\ell_1\geq 2n$ and $3n-\ell_2+\ell_1\geq 2n$, we get to an element which consists of a transposition $((1,y)(1,y+1))$ or $((2,y)(2,y+1))$ for some $y\geq 2n$. 
At this point, we multiply by $g_1^{n-4}[g_1,g_2]g_1^{-n+4}$ in order to apply a transposition $((1,n-4)(1,n-3))$. This commutes with the other one we had already, so we can now multiply by $g_1^{3n+\ell_1}[g_1,g_2]g_1^{-3n-\ell_1}$ or $g_1^{-3n+\ell_2 -\ell_1}[g_1,g_2]g_1^{3n-\ell_2+\ell_1} $ (depending on the sign of $k$), and we will be left with element $g_1^{n-4}[g_1,g_2]g_1^{-n+4}$. Finally, multiply by $g_1^{n-4}$ to get to our desired fixed element $g_1^{n-4}[g_1,g_2]$.

This path has length at most $18n$, and it avoids $B(1,\frac{n}{2})$ since at each step we are either on $R$ or the element has a transposition at $((1,y)(1,y+1))$ or $((2,y)(2,y+1))$ with $y\geq n-4$, so the metric is at least $n-4$. Thus $\mathcal{H}_m$ has linear divergence for all $m\geq 3$.

This concludes the proof of Theorem \ref{houghton} and, therefore, $\mathcal{H}_m$ has linear divergence for all $m\geq 2$.
\end{proof}
\section{Divergence function of halo products}\label{section:halo products}
Recently Genevois and Tessera introduced a class of groups that they called halo products \cite{gen-tes}, which includes wreath products and Houghton group $\mathcal{H}_2$, but also all the lampshuffler groups, i.e. groups of the form $\FSym(F)\rtimes H$. Here $\FSym(H)$ is the group of finitely supported permutations of $H$, and the action of $H$ on $\FSym(H)$ is by left-multiplication. 

\begin{definition}
    Let $H$ be a group. A halo of groups $\mathcal{L}$ over $H$ is the data, for every
subset $S \subseteq  H$, of a group $L(S)$ such that:
\begin{enumerate}
    \item for all $R, S \subseteq H$, if $R \subseteq S$ then $L(R) \leq L(S)$;
    \item $L(\emptyset) = \{1\}$ and $L(H) = \langle L(S)\mid S \subseteq H\ \text{finite}\rangle$;
    \item for all $R, S \subseteq H$, $L(R) \cap L(S) = L(R \cap S)$.
\end{enumerate}
Given a morphism $\alpha : H \to \operatorname{Aut}(L(H))$ satisfying $\alpha(h)(L(S)) = L(hS)$ for all $S \subseteq G,
h \in H$, the halo product $\mathcal{L}_\alpha H$ is the
semidirect product $L(H) \rtimes_{\alpha} H$.
\end{definition}
A halo of groups $\mathcal{L}$ over a group $H$ is large-scale commutative if there exists a constant $D \geq 0$ such that, for all subsets $R, S \subseteq X$ at distance at least $D$, the subgroups $L(R)$ and $L(S)$ commute.
\begin{thm}\label{thm:halo products}
    Let $H$ be a finitely generated group and $G=L(H)\rtimes_{\alpha} H$ a halo product. If $G$ is finitely generated, and the halo is large-scale commutative, then $G$ has linear divergence.
\end{thm}
\begin{proof}
    We can run a very similar argument as in the case of wreath products, exploiting the following facts:
    \begin{itemize}
        \item $H$ is undistorted in $G$, and we can therefore use the $H$-metric of the $H$-~projection of an element as lower bound for its $G$-metric;
        \item we have commutativity of elements in $L(H)$ whose support is far enough.
    \end{itemize}
    Let $T_H$ be a finite generating set for $H$, and $T$ a finite subset of $L(H)$ such that $G=\langle T_H\sqcup T\rangle$.
Let $K$ be the support of $T$, that is the minimal subset of $H$ such that $T$ is contained in its halo. The extra complication that we encounter in this case with respect to the wreath product case is that $K$ is not $\{1_H\}$. It is however still a finite set, so we can go around this problem by just taking longer (still linear) paths. In order to do that, we can use Lemma \ref{Lemma:rays} which allows us to get far enough from $1_H$ that multiplying by some $t\in T$ does not interact with the element we started with.
\end{proof}
\section{Divergence function of Baumslag-Solitar groups}\label{section:baumslag-solitar groups}
In this section we study the divergence of Baumslag-Solitar groups. Recall that, for integers $p,q\geq 1$, the Baumslag-Solitar group $BS(p,q)$ is defined by the presentation $\langle a,t\mid ta^{p}t^{-1}=a^{q}\rangle$. We show the following proposition.
\begin{proposition}\label{proposition:bsgroups}
    Baumslag-Solitar groups $BS(p,q)$ have linear divergence for all $p,q\geq 1$.
\end{proposition}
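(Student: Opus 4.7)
The plan is to adapt the escape-commutator strategy used in Theorem~\ref{houghton} for Houghton groups, splitting into cases according to $(p,q)$. If $p=1$ or $q=1$, then $BS(p,q)$ is a one-ended metabelian group (a semidirect product of $\Z[1/\max(p,q)]$ with $\Z$), so linear divergence follows from the results of Dru\c{t}u, Mozes and Sapir \cite{DMS10} and Dru\c{t}u and Sapir \cite{DS05} cited in the introduction. Assume from now on that $p,q\geq 2$.

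The key preliminary observation will be that the generator $t$ is undistorted in $BS(p,q)$: the action on the Bass--Serre tree $T$ furnishes a $1$-Lipschitz projection $BS(p,q)\to T$ under which $t$ translates by $1$, yielding $\norm{t^{k}}=|k|$. Consequently $\{gt^{k}:k\in\Z\}$ is a bi-infinite geodesic through any $g\in BS(p,q)$. I would fix a target element of length $n$, say $g^{*}:=t^{n-1}a$, and aim to connect an arbitrary $g$ with $\norm{g}=n$ to $g^{*}$ by a path of length linear in $n$ avoiding $B(1,n/2)$. Applying Lemma~\ref{Lemma:rays} to $\{gt^{k}\}$ produces a $t$-ray from $g$ that avoids $B(1,n/2)$; travel along this ray by $3n$ steps to reach $gt^{\varepsilon 3n}$ with $\varepsilon\in\{\pm 1\}$, now at Cayley distance at least $2n$ from the identity.

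From there I would run a Houghton-style commutator construction: multiply by $a$ and then by $t^{-\varepsilon 3n}g^{-1}$ to reach $(gt^{\varepsilon 3n})\,a\,(gt^{\varepsilon 3n})^{-1}$, a conjugate of $a$ whose Bass--Serre action fixes the vertex $gt^{\varepsilon 3n}v_{0}$, sitting at tree-distance at least $2n$ from the base vertex $v_{0}$. Multiplying by $g^{*}$ on the right and then symmetrically unravelling the far-out $a$-conjugate (via a second $t$-ray based at $g^{*}$, again chosen using Lemma~\ref{Lemma:rays}) brings us to $g^{*}$. That the intermediate elements stay outside $B(1,n/2)$ is verified via the Bass--Serre projection: any element whose HNN normal form witnesses a non-trivial feature at tree-depth $\geq n/2$ must have Cayley length $\geq n/2$. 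In the subcase $p=q\geq 2$, centrality of $a^{p}$ trivialises the commutator bookkeeping, and one may alternatively exploit the bi-infinite geodesic $\{ga^{k}\}$ since $\langle a\rangle$ is undistorted in $BS(p,p)$.

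The hard part will be to verify that the unravelling of the far-out conjugate of $a$ can be carried out in $O(n)$ generator steps while never approaching the identity. In Houghton's setting this step is automatic because disjoint transpositions commute; in $BS(p,q)$ with $p\neq q$, however, $a$ does not commute with any non-trivial $t$-conjugate of itself, so the undoing must proceed via explicit manipulation of words in $t$ and $a$ whose metric behaviour is controlled by the HNN normal form and the tree-depth invariant throughout. Making this bookkeeping precise, and confirming that the total concatenated path has length $O(n)$, is the technical heart of the proof.
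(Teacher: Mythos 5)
Your reduction of the cases $p=1$ or $q=1$ (solvable) and $p=q$ (virtually $\Z\times F_p$) matches the paper, but for $1<p<q$ there is a genuine gap, and it sits exactly where you place the ``technical heart'': you never exhibit a certificate that the return journey from the far-out conjugate of $a$ stays outside the ball. The paper's proof lives entirely in that certificate. It first reduces, via Whyte's quasi-isometry classification, to the single group $BS(2,4)$, and then introduces the $a$-level $k=i_1+i_2 2^{j_1}+\dots+i_h 2^{j_1+\dots+j_{h-1}}$ of a word $a^{i_1}t^{j_1}\dots a^{i_h}t^{j_h}$, a well-defined invariant of the group element satisfying: an element of length at most $m$ has $a$-level at most $2^{m-1}$. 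The path then climbs in the \emph{expanding} $t$-direction to height about $2n$, applies $a^2$ there, and descends; the single application of $a^2$ at that height boosts the $a$-level to roughly $2^{2n+1}$, and since the $a$-level is unchanged by right multiplication by $t^{\pm1}$, every vertex of the descent (and of the subsequent multiplication by a geodesic word for $g^{-1}$) has $a$-level at least $2^{2n}$ and hence length at least $2n$. This exploitation of the exponential distortion of $\langle a\rangle$ is the missing mechanism in your sketch; Bass--Serre tree depth of the conjugate $gt^{3n}at^{-3n}g^{-1}$ does not by itself control the lengths of the \emph{intermediate} elements $gt^{\varepsilon 3n}at^{-\varepsilon j}$ along the way back.

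A second, related problem is your choice of bi-infinite geodesic $\{gt^{k}\}$. Lemma \ref{Lemma:rays} only guarantees that \emph{one} of its two rays avoids $B(1,n/2)$, and if that ray is the one going in the contracting $t$-direction ($k\to-\infty$), then inserting $a$ at the far end changes the $a$-level by only $2^{\ell-3n}$, which is negligible; the construction then produces no usable lower bound on the return path (consider $g=t^{-2n}$, for which the positive ray passes through the identity and you are forced onto the negative one). The paper avoids this by using instead the geodesic $\dots,ga^{-1}t,\,ga^{-1},\,g,\,ga,\,gat,\dots$, both of whose rays travel in the expanding $t$-direction, so that whichever ray Lemma \ref{Lemma:rays} provides, the ``go up, apply $a^2$, come down'' move is available. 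Without these two ingredients --- the expanding-direction geodesic and the $a$-level estimate --- your outline is a plan rather than a proof.
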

It is a result from \cite{DMS10} and \cite{DS05} that finitely-generated one-ended solvable groups have linear divergence, so $BS(1,p)$ has linear divergence for any $p\geq 1$. By \cite{whyte2004large}, the groups $BS(p,p)$ with $p\geq 2$ contain $\Z\times F_p$ of finite index, having thus linear divergence. We are only left with the case $BS(p,q)$, with $1<p< q$. 
\begin{proof} It is shown in \cite{whyte2004large} that all $BS(p,q)$ with $1<p<q$ are quasi-isometric, so it is sufficient to show that $BS(2,4)$ has linear divergence in order to conclude that all the Baumslag-Solitar groups have linear divergence.

Let $G$ be $BS(2,4)=\langle a,t\mid ta^{2}t^{-1}=a^{4}\rangle$ and consider its Cayley graph with respect to the generating set $\{a^{\pm 1},t^{\pm 1}\}$. Take $g\in G$ of length $2n$, and let $w$ be a shortest word representing $g$. We will connect any such $g$ to $a^{2^{2n+1}}=t^{2n}a^2 t^{-2n}$, a fixed element of length $4n+2$, in linear time and avoiding the ball $B(1,n)$.

Define the $t$-level $\ell$ and the $a$-level $k$ of $g$ as follows. Let $a^{i_1}t^{j_1}\dots a^{i_h}t^{j_h}$, where only $i_1$ and $j_h$ can possibly be $0$, be a word representing $g$. The $t$-level $\ell$ of this word is the sum of the $t$-exponents $j_1+\dots +j_h$, and $a$-level $k$ of this word is $i_1+i_2 2^{j_1}+i_3 2^{j_{1}+j_{2}}+\dots +i_h 2^{j_{1}+\dots +j_{h-1} }$. Both $\ell$ and $k$ are invariant under applying the group relation to the word, so they are well-defined for group elements.  In other words, the $t$-level of an element is the $t$-coordinate of the projection onto the $t$-axis of the Cayley graph, and similarly for $a$.

We assume that $k\geq 0$. Let $\Gamma_0$ be the sheet of the Cayley graph that contains the geodesic $\{t^{j}\mid j\in\Z\}$.

We use the bi-infinite geodesic $\{\dots,ga^{-1}t^{2},ga^{-1}t,ga^{-1},g,ga,gat,gat^{2},\dots\}$ through $g$. Then, by Lemma \ref{Lemma:rays}, one of the rays $R_1=\{ga t^{m}\mid m\in \N\}$ and $R_2=\{ga^{-1}t^{m}\mid m\in \N\}$ does not enter the ball $B(1,n)$. Let $R$ be such a ray, $R=\{ga^{\pm 1} t^{m}\mid m\in \N\}$. 

Let $\mathfrak{p}_1$ be the path starting from $g$ and consisting of $2n-\ell+1$ steps along $R$, i.e. whose vertices are $g, ga^{\pm 1}, ga^{\pm 1}t,\dots,ga^{\pm 1}t^{2n-\ell}$. It is on $R$, so it is outside $B(1,n)$. Let $\mathfrak{p}_2$ be $ga^{\pm 1}t^{2n-\ell}, ga^{\pm 1}t^{2n-\ell}a, ga^{\pm 1}t^{2n-\ell}a^{2}$.

Let $\mathfrak{p}_3$ be the path starting at $ga^{\pm 1}t^{2n-\ell}a^{2}$ and obtained multiplying by $t^{-1}$ for $2n-\ell$ times. The $a$-level of $\mathfrak{p}_3$ is $k\pm 2^\ell +2^{2n+1}\geq k+2^{2n+1}-2^{\ell}\geq k+2^{2n+1}-2^{2n}\geq 2^{2n}$.
 
Let $\mathfrak{p}_4$ be the $1$-edge path $ga^{\pm 1}t^{2n-\ell}a^{2}t^{-(2n-\ell)}, ga^{\pm 1}t^{2n-\ell}a^{2}t^{-(2n-\ell)}a^{\mp1}=ga^{2^{2n-\ell+1}}$, where the sign of the exponent of $a$ is the opposite sign than the one in $R$. The path $\mathfrak{p}_1 \cup \mathfrak{p}_2 \cup \mathfrak{p}_3 \cup \mathfrak{p}_4$ is a path from $g$ to $ga^{2^{2n-\ell+1}}$ that increases the $a$-level of $g$ in a number of steps that is linear in $n$.

Let $\mathfrak{p}_5$ be the path starting at the end of $\mathfrak{p}_4$ that consists of multiplying by the generators of a shortest word representing $g^{-1}$, therefore of length $2n$. We have that $ta^{2^{m}}t^{-1}=a^{2^{m+1}}$ and so $ga^{2^{2n-\ell+1}}g^{-1}=a^{2^{2n+1}}$. Therefore $\mathfrak{p}_5$ connects the end of $\mathfrak{p}_4$ to $a^{2^{2n+1}}$. The $a$-level of $ga^{\pm 1}t^{2n-\ell}a^{2}t^{-(2n-\ell)}a^{\mp 1}$ is $k+2^{2n+1}$ and of $a^{2^{2n+1}}$ is $2^{2n+1}$. Hence, the $a$-level of $\mathfrak{p}_5$ is always at least $2^{2n+1}-2^{2n-1}>2^{2n}$, since a shortest path that represents an element of length $2n$ has $a$-levels in $[-2^{2n-1},2^{2n-1}]$. 

The paths $\mathfrak{p}_3$ and $\mathfrak{p}_5$ avoid $B(1,2n)$, because their $a$-level is at least $2^{2n}$. This is because if an element $h$ has $\norm{h}\leq m$, then its $a$-level is at most $2^{m-1}$.

We have that $\mathfrak{p}_1 \cup \mathfrak{p}_2 \cup \mathfrak{p}_3 \cup \mathfrak{p}_4\cup \mathfrak{p}_5$ is a path of length $6n-2\ell +4\leq 10n +4$ from $g$ to $a^{2^{2n+1}}$ that avoids the ball $B(1, n)$.

Now consider elements with $k<0$. By symmetry, we can apply the same procedure and connect them to $a^{-2^{2n+1}}$. The path given by multiplying by $t^{2n}a^{4}t^{-2n}$ connects $a^{-2^{2n+1}}$ to $a^{2^{2n+1}}$ in $4n+4$ steps avoiding $B(1,2n)$.
Thus any element of length $2n$ can be connected to $a^{2^{2n+1}}=t^{2n}a^2 t^{-2n}$ in time linear in $n$ outside $B(1,n)$, yielding linearity of divergence.
\end{proof}

\bibliographystyle{spmpsci}      
\bibliography{refs}

@article{DMS10,
    AUTHOR = {Dru\c{t}u, Cornelia and Mozes, Shahar and Sapir, Mark},
     TITLE = {Divergence in lattices in semisimple {L}ie groups and graphs
              of groups},
   JOURNAL = {Trans. Amer. Math. Soc.},
  FJOURNAL = {Transactions of the American Mathematical Society},
    VOLUME = {362},
      YEAR = {2010},
    NUMBER = {5},
     PAGES = {2451--2505},
      ISSN = {0002-9947,1088-6850},
   MRCLASS = {20F67 (20F65)},

MRREVIEWER = {Olivier\ Guichard},
       DOI = {10.1090/S0002-9947-09-04882-X},
       URL = {https://doi.org/10.1090/S0002-9947-09-04882-X},
}

@article{BD14,
    AUTHOR = {Behrstock, Jason and Dru\c{t}u, Cornelia},
     TITLE = {Divergence, thick groups, and short conjugators},
   JOURNAL = {Illinois J. Math.},
  FJOURNAL = {Illinois Journal of Mathematics},
    VOLUME = {58},
      YEAR = {2014},
    NUMBER = {4},
     PAGES = {939--980},
      ISSN = {0019-2082,1945-6581},
   MRCLASS = {20F65 (20F10 20F67 20F69 53C23 57M07 57N10)},

MRREVIEWER = {Stephan\ Rosebrock},
       URL = {http://projecteuclid.org/euclid.ijm/1446819294},
}

@article{DS05,
    AUTHOR = {Dru\c{t}u, Cornelia and Sapir, Mark},
     TITLE = {Tree-graded spaces and asymptotic cones of groups},
      NOTE = {With an appendix by Denis Osin and Mark Sapir},
   JOURNAL = {Topology},
  FJOURNAL = {Topology. An International Journal of Mathematics},
    VOLUME = {44},
      YEAR = {2005},
    NUMBER = {5},
     PAGES = {959--1058},
      ISSN = {0040-9383},
   MRCLASS = {20F67 (57M07)},

MRREVIEWER = {Ilya\ Kapovich},
       DOI = {10.1016/j.top.2005.03.003},
       URL = {https://doi.org/10.1016/j.top.2005.03.003},
}

@article{Ger94b,
 	title={Quadratic divergence of geodesics in {CAT}(0)-spaces},
  	author={Gersten, Stephen M},
  	journal={Geometric \& Functional Analysis},
  	volume={4},
  	number={1},
  	pages={37--51},
  	year={1994},
  	publisher={Springer}
}

@article{Gr3,
 	title={Geometric group theory, {V}ol. 2: {A}symptotic invariants of infinite groups},
  	author={Gromov, Mikha{\i}l},
  	journal={Bull. Amer. Math. Soc},
  	volume={33},
  	number={0273},
  	pages={0979},
  	year={1996}
}

@article{duchin2010divergence,
    AUTHOR = {Duchin, Moon and Rafi, Kasra},
     TITLE = {Divergence of geodesics in {T}eichm\"{u}ller space and the
              mapping class group},
   JOURNAL = {Geom. Funct. Anal.},
  FJOURNAL = {Geometric and Functional Analysis},
    VOLUME = {19},
      YEAR = {2009},
    NUMBER = {3},
     PAGES = {722--742},
      ISSN = {1016-443X,1420-8970},
   MRCLASS = {30F60 (20F65)},

MRREVIEWER = {Lee-Peng\ Teo},
       DOI = {10.1007/s00039-009-0017-3},
       URL = {https://doi.org/10.1007/s00039-009-0017-3},
}

@article{OOS05,
    AUTHOR = {Ol\cprime shanskii, Alexander Yu. and Osin, Denis V. and
              Sapir, Mark V.},
     TITLE = {Lacunary hyperbolic groups},
      NOTE = {With an appendix by Michael Kapovich and Bruce Kleiner},
   JOURNAL = {Geom. Topol.},
  FJOURNAL = {Geometry \& Topology},
    VOLUME = {13},
      YEAR = {2009},
    NUMBER = {4},
     PAGES = {2051--2140},
      ISSN = {1465-3060,1364-0380},
   MRCLASS = {20F67 (20F65 20F69)},

MRREVIEWER = {Goulnara\ N.\ Arzhantseva},
       DOI = {10.2140/gt.2009.13.2051},
       URL = {https://doi.org/10.2140/gt.2009.13.2051},
}

@article{floyd,
    AUTHOR = {Floyd, William J.},
     TITLE = {Group completions and limit sets of {K}leinian groups},
   JOURNAL = {Invent. Math.},
  FJOURNAL = {Inventiones Mathematicae},
    VOLUME = {57},
      YEAR = {1980},
    NUMBER = {3},
     PAGES = {205--218},
      ISSN = {0020-9910,1432-1297},
   MRCLASS = {57M15 (22E40 30F40 51M20)},

MRREVIEWER = {I.\ Kra},
       DOI = {10.1007/BF01418926},
       URL = {https://doi.org/10.1007/BF01418926},
}

@article{woess,
    AUTHOR = {Woess, Wolfgang},
     TITLE = {Lamplighters, {D}iestel-{L}eader graphs, random walks, and
              harmonic functions},
   JOURNAL = {Combin. Probab. Comput.},
  FJOURNAL = {Combinatorics, Probability and Computing},
    VOLUME = {14},
      YEAR = {2005},
    NUMBER = {3},
     PAGES = {415--433},
      ISSN = {0963-5483,1469-2163},
   MRCLASS = {60C05 (05C25 60G50 60J45)},

MRREVIEWER = {Donald\ I.\ Cartwright},
       DOI = {10.1017/S0963548304006443},
       URL = {https://doi.org/10.1017/S0963548304006443},
}

@article{eskin2006quasiisometries,
    AUTHOR = {Eskin, Alex and Fisher, David and Whyte, Kevin},
     TITLE = {Coarse differentiation of quasi-isometries {I}: {S}paces not
              quasi-isometric to {C}ayley graphs},
   JOURNAL = {Ann. of Math. (2)},
  FJOURNAL = {Annals of Mathematics. Second Series},
    VOLUME = {176},
      YEAR = {2012},
    NUMBER = {1},
     PAGES = {221--260},
      ISSN = {0003-486X,1939-8980},
   MRCLASS = {22E25 (05C05 05C76 20F65)},

MRREVIEWER = {B.\ Sury},
       DOI = {10.4007/annals.2012.176.1.3},
       URL = {https://doi.org/10.4007/annals.2012.176.1.3},
}

@article{whyte2004large,
    AUTHOR = {Whyte, Kevin},
     TITLE = {The large scale geometry of the higher {B}aumslag-{S}olitar
              groups},
   JOURNAL = {Geom. Funct. Anal.},
  FJOURNAL = {Geometric and Functional Analysis},
    VOLUME = {11},
      YEAR = {2001},
    NUMBER = {6},
     PAGES = {1327--1343},
      ISSN = {1016-443X,1420-8970},
   MRCLASS = {20F65 (20F69)},
 
MRREVIEWER = {Lee\ Mosher},
       DOI = {10.1007/s00039-001-8232-6},
       URL = {https://doi.org/10.1007/s00039-001-8232-6},
}

@article{Erschler,
    AUTHOR = {Erschler, Anna},
     TITLE = {Generalized wreath products},
   JOURNAL = {Int. Math. Res. Not.},
  FJOURNAL = {International Mathematics Research Notices},
      YEAR = {2006},
     PAGES = {Art. ID 57835, 14},
      ISSN = {1073-7928,1687-0247},
   MRCLASS = {05C25 (60B15)},

MRREVIEWER = {Tatiana\ Smirnova-Nagnibeda},
       DOI = {10.1155/IMRN/2006/57835},
       URL = {https://doi.org/10.1155/IMRN/2006/57835},
}

@article{DL,
    AUTHOR = {Diestel, Reinhard and Leader, Imre},
     TITLE = {A conjecture concerning a limit of non-{C}ayley graphs},
   JOURNAL = {J. Algebraic Combin.},
  FJOURNAL = {Journal of Algebraic Combinatorics. An International Journal},
    VOLUME = {14},
      YEAR = {2001},
    NUMBER = {1},
     PAGES = {17--25},
      ISSN = {0925-9899,1572-9192},
   MRCLASS = {05C25 (20F65)},

MRREVIEWER = {Wolfgang\ Woess},
       DOI = {10.1023/A:1011257718029},
       URL = {https://doi.org/10.1023/A:1011257718029},
}

@article{bertacchi,
    AUTHOR = {Bertacchi, Daniela},
     TITLE = {Random walks on {D}iestel-{L}eader graphs},
   JOURNAL = {Abh. Math. Sem. Univ. Hamburg},
  FJOURNAL = {Abhandlungen aus dem Mathematischen Seminar der
              Universit\"{a}t Hamburg},
    VOLUME = {71},
      YEAR = {2001},
     PAGES = {205--224},
      ISSN = {0025-5858,1865-8784},
   MRCLASS = {60G50 (05C25)},

       DOI = {10.1007/BF02941472},
       URL = {https://doi.org/10.1007/BF02941472},
}

@article{GolanSapir,
    AUTHOR = {Golan, Gili and Sapir, Mark},
     TITLE = {Divergence functions of {T}hompson groups},
   JOURNAL = {Geom. Dedicata},
  FJOURNAL = {Geometriae Dedicata},
    VOLUME = {201},
      YEAR = {2019},
     PAGES = {227--242},
      ISSN = {0046-5755,1572-9168},
   MRCLASS = {20F65},

MRREVIEWER = {Panos\ Papasoglu},
       DOI = {10.1007/s10711-018-0390-x},
       URL = {https://doi.org/10.1007/s10711-018-0390-x},
}

@article{Houghton,
    AUTHOR = {Houghton, C. H.},
     TITLE = {The first cohomology of a group with permutation module
              coefficients},
   JOURNAL = {Arch. Math. (Basel)},
  FJOURNAL = {Archiv der Mathematik},
    VOLUME = {31},
      YEAR = {1978/79},
    NUMBER = {3},
     PAGES = {254--258},
      ISSN = {0003-889X,1420-8938},
   MRCLASS = {20J10},
  
MRREVIEWER = {Kenneth\ S.\ Brown},
       DOI = {10.1007/BF01226445},
       URL = {https://doi.org/10.1007/BF01226445},
}

@article{commH,
    AUTHOR = {Burillo, Jos\'{e} and Cleary, Sean and Martino, Armando and
              R\"{o}ver, Claas E.},
     TITLE = {Commensurations and metric properties of {H}oughton's groups},
   JOURNAL = {Pacific J. Math.},
  FJOURNAL = {Pacific Journal of Mathematics},
    VOLUME = {285},
      YEAR = {2016},
    NUMBER = {2},
     PAGES = {289--301},
      ISSN = {0030-8730,1945-5844},
   MRCLASS = {20F65 (20B07 20F28)},
 
MRREVIEWER = {Enric\ Ventura Capell},
       DOI = {10.2140/pjm.2016.285.289},
       URL = {https://doi.org/10.2140/pjm.2016.285.289},
}

@article{sisto2012metric,
  title={On metric relative hyperbolicity},
  author={Sisto, Alessandro},
  journal={arXiv:1210.8081},
  year={2012}
}

@article{brady2021divergence,
    AUTHOR = {Brady, Noel and Tran, Hung Cong},
     TITLE = {Divergence of finitely presented groups},
   JOURNAL = {Groups Geom. Dyn.},
  FJOURNAL = {Groups, Geometry, and Dynamics},
    VOLUME = {15},
      YEAR = {2021},
    NUMBER = {4},
     PAGES = {1331--1361},
      ISSN = {1661-7207,1661-7215},
   MRCLASS = {20F65 (20F69)},
  
MRREVIEWER = {Michael\ Hull},
       DOI = {10.4171/ggd/632},
       URL = {https://doi.org/10.4171/ggd/632},
}

@article{Parry,
    AUTHOR = {Parry, Walter},
     TITLE = {Growth series of some wreath products},
   JOURNAL = {Trans. Amer. Math. Soc.},
  FJOURNAL = {Transactions of the American Mathematical Society},
    VOLUME = {331},
      YEAR = {1992},
    NUMBER = {2},
     PAGES = {751--759},
      ISSN = {0002-9947,1088-6850},
   MRCLASS = {20F32 (05C25 20E22)},

MRREVIEWER = {John\ D. P. Meldrum},
       DOI = {10.2307/2154138},
       URL = {https://doi.org/10.2307/2154138},
}

@article {Behrstock,
    AUTHOR = {Behrstock, Jason A.},
     TITLE = {Asymptotic geometry of the mapping class group and
              {T}eichm\"{u}ller space},
   JOURNAL = {Geom. Topol.},
  FJOURNAL = {Geometry and Topology},
    VOLUME = {10},
      YEAR = {2006},
     PAGES = {1523--1578},
      ISSN = {1465-3060,1364-0380},
   MRCLASS = {20F69 (20F67 30F60 32G15 57M07)},
  
MRREVIEWER = {Gabriela\ Schmith\"{u}sen},
       DOI = {10.2140/gt.2006.10.1523},
       URL = {https://doi.org/10.2140/gt.2006.10.1523},
}

@article{gen-tes,
title={Lamplighter-like geometry of groups},
  author={Genevois, Anthony and Tessera, Romain},
  journal={arXiv:2401.13520},
  year={2024}
}

@article{ferragut-thesis,
  title={Geometric rigidity of quasi-isometries in horospherical products},
  author={Ferragut, Tom},
  journal={arXiv preprint arXiv:2211.04093},
  year={2022}
}

@article{ferragut-paper,
  title={Geodesics and visual boundary of horospherical products},
  author={Ferragut, Tom},
  journal={arXiv preprint arXiv:2009.04698},
  year={2020}
}

@article{woess-survey,
title={What is a horocyclic product, and how is it related to lamplighters?},
author={Woess, Wolfgang},
journal={Internat. Math. Nachrichten of the Austrian Math. Soc.},
volume={224},
pages={1--27},
year={2013}
}

\end{document}